\documentclass[a4paper,11pt]{article}
\usepackage[top=3.0cm, bottom=3.0cm, inner=3.0cm, outer=3.0cm, includefoot]{geometry}
\usepackage{amsmath}
\usepackage{amsthm}
\usepackage{amsfonts}
\usepackage{tcolorbox}
\usepackage{graphicx}
\usepackage{authblk}
\usepackage{hyperref}
\hypersetup{hidelinks}

\usepackage[numbers,sort&compress]{natbib}

\title{Monotonic normalized heat diffusion for distance-regular graphs with classical parameters of diameter $3$}

\author[1]{Shiping Liu\thanks{Email: spliu@ustc.edu.cn}}
\author[2]{Heng Zhang\thanks{Corresponding author. Email: hengz@mail.ustc.edu.cn}}

\affil[1,2]{School of Mathematical Sciences, University of Science and Technology of China, Hefei 230026, China}

\date{} 

\begin{document}

\maketitle

\newtheorem{thm}{Theorem}
\newtheorem{prop}{Proposition}
\newtheorem{cor}{Corollary}
\newtheorem{lem}{Lemma}
\newtheorem{defi}{Definition}
\newtheorem{rmk}{Remark}
\newtheorem{eg}{Example}
\newcommand\gauss[2]{\begin{bmatrix}#1\\#2\\\end{bmatrix}}

\begin{abstract}
 We prove the monotonic normalized heat diffusion  property on distance-regular graphs with classical parameters of diameter $3$. Regev and Shinkar found a Cayley graph for which this property fails. On the other hand, this property has been proved on abelian Cayley graphs, graphs with $3$ distinct eigenvalues and regular bipartite graphs with $4$ distinct eigenvalues by Price, Nica and Kubo-Namba, respectively. A distance regular graph with classical parameters of diameter $3$ has $4$ distinct eigenvalues and is not necessarily bipartite or vertex transitive. 
\end{abstract}

\section{Introduction}

In this paper, we study the heat kernel $H:[0,\infty)\times V\times V\to \mathbb{R}$ on a simple finite connected graph $G=(V,E)$.
For any two vertices $u,v\in V$ and any $t\geq 0$,  $H_t(u, v)$ can be considered as the probability that a  continuous-time random walker starting at vertex $u$, is at vertex $v$ at time $t$.
For any two distinct vertices $u,v\in V$, we consider the function 
$$t\mapsto r_t(u, v):=\frac{H_t(u, v)}{H_t(u, u)}.$$
Clearly, we have  
\begin{equation}\label{eq:two_ends}
    r_0(u, v)=0, \,\,\text{and}\,\, \lim_{t\to\infty}r_t(u, v)=1.
\end{equation}
It is natural to ask whether the above function is monotonically non-decreasing or not. This question has been considered by Regev and Shinkar \cite{RS16}. We will refer to this property as
the {\it monotonic normalized heat diffusion property}, or the MNHD property for short, following \cite{KN22,NB20}.

Due to \eqref{eq:two_ends}, a necessary condition for the MNHD property is that $r_t(u, v) \le 1$ holds for any $t \ge 0$. However, there do exist graphs for which the function $r_t(u, v)$ exceeds 1 for certain vertices $u,v$ at certain time $t$. This is observed by Regev and Shinkar \cite[Proposition A.1]{RS16}. Regev and Shinkar mentioned that this observation was brought to their attention by Cheeger. They also provided an explicit example which is a $4$-regular graph with $10$ vertices \cite[Figure 2]{RS16}.

A sufficient condition to ensure that $r_t(u, v)\le 1$ is a \emph{heat homogeneity}, that is, the heat kernel has constant diagonal, which means that the value of $H_t(u,u)$ is independent of the choice of $u$ (see \cite[Section 1]{RS16} and \cite[Section 1]{NB20}). In fact, the heat homogeneity holds if and only if the graph is walk-regular \cite[Theorem A.1]{NB20}. Recall that a graph is walk-regular if  for every $\ell\geq 2$, the number of closed walks of length $\ell$
starting from and ending at every vertex is constant. Typical walk-regular graphs includes vertex-transitive graphs, distance-regular graphs, and regular graphs with at most $4$ distinct eigenvalues.

Peres asked in 2013 (see \cite[Section 1]{RS16}) whether the MNHD property holds for all \emph{vertex transitive graphs} or not. Regev and Shinkar \cite{RS16} 
showed that there exists a Cayley graph for which the MNHD property fails. This provides a negative answer to Peres' question. On the other hand, Price \cite{P17} proved that the MNHD property does hold for all finite \emph{abelian} Cayley graphs. 


Nica \cite{NB20} showed that the MNHD property holds for graphs with $3$ distinct Laplacian eigenvalues. We stress that the result of Nica  does not require the graph to be regular.  Kubo and Namba \cite{KN22} confirmed the MNHD property for regular \emph{bipartite} graphs with $4$ distinct eigenvalues. It is natural to ask whether the MNHD property holds for all \emph{regular graphs with at most $4$ distinct eigenvalues}.

\emph{Distance-regular graphs} are introduced by Biggs \cite{Biggs} as a natural extension of distance-transitive graphs. A graph is distance-regular if for any two vertices $u,v$ at distance $h$, the number of vertices that are at distance $i$ to $u$ and at distance $j$ to $v$ only depends on the three numbers $h,i$ and $j$. Every distance-regular grpah has diameter $D$ if and only if it has $D+1$ distinct eigenvalues. 

For a regular graph $G$ with $4$ distinct eigenvalue, 
van Dam and Haemers \cite{DH97} showed that $G$ is distance-regular if and only if the number of vertices at distance $2$ from each given vertex satisfies an expression in terms of eigenvalues of $G$.

We identify a class of distance-regular graphs with $4$ distinct eigenvalues  that are not necessarily \emph{bipartite}, for which the MNHD property holds true.  Our main result states as follows.

\begin{thm}\label{thm1}
    Let  $G=(V,E)$ be a  distance-regular graph with classical parameters  of diameter $3$. Then $G$ has the monotonic normalized heat diffusion (MNHD) property.
\end{thm}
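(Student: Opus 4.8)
The plan is to express the heat kernel spectrally and reduce the MNHD property to a statement about a single scalar function of $t$. Since $G$ is distance-regular of diameter $3$, it has exactly $4$ distinct eigenvalues $\theta_0 = k > \theta_1 > \theta_2 > \theta_3$ (where $k$ is the valency), and the adjacency matrix $A$ has a spectral decomposition $A = \sum_{i=0}^{3} \theta_i E_i$ in terms of the primitive idempotents $E_i$. The heat kernel is $H_t = e^{-t(k\,\mathrm{Id}-A)} = \sum_{i=0}^{3} e^{-t(k-\theta_i)} E_i$ (using the normalized Laplacian, which for a $k$-regular graph is $\mathrm{Id} - A/k$; I will be careful about whether the normalization scales $t$, but this only reparametrizes time and does not affect monotonicity). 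The crucial feature of distance-regularity is that each idempotent entry $(E_i)_{uv}$ depends only on the distance $\partial(u,v)$; writing $(E_i)_{uv} = \frac{1}{|V|} Q_{i,\partial(u,v)}$ in terms of the dual eigenvalue / $Q$-matrix entries, both $H_t(u,u)$ and $H_t(u,v)$ become functions of $t$ and of the distance alone. Thus $r_t(u,v)$ takes at most three distinct forms, one for each distance $h \in \{1,2,3\}$, and I must show each is non-decreasing in $t$.

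**First I would** set up, for each distance $h$, the explicit formula $r_t(u,v) = \frac{\sum_i Q_{i,h}\, e^{\theta_i t/k}}{\sum_i Q_{i,0}\, e^{\theta_i t/k}}$. Since $Q_{i,0} = m_i > 0$ are the eigenvalue multiplicities, the denominator is a positive combination of exponentials. Monotonicity of $r_t$ is equivalent to the numerator of its derivative being non-negative, i.e.\ to showing that $f_h(t) := \big(\tfrac{d}{dt} r_t(u,v)\big)\cdot H_t(u,u)^2 \ge 0$. Expanding, $f_h(t) = \sum_{i<j} (\theta_i - \theta_j)(Q_{i,h} m_j - Q_{j,h} m_i)\, e^{(\theta_i+\theta_j)t/k}$, a sum of six exponential terms indexed by pairs $i<j$. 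The problem reduces to proving this exponential polynomial is non-negative for all $t \ge 0$. Here I would exploit the \emph{classical parameters} $(D, b, \alpha, \beta)$ hypothesis: these give closed-form expressions for the eigenvalues $\theta_i$, the multiplicities $m_i$, and the dual eigenvalues $Q_{i,h}$ as explicit rational functions of $b, \alpha, \beta$. I would substitute these to obtain the coefficients $c_{ij} := (\theta_i-\theta_j)(Q_{i,h} m_j - Q_{j,h} m_i)$ as concrete algebraic expressions, and this is where the classical-parameter structure must do the real work.

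**The hard part will be** establishing non-negativity of the six-term exponential sum $f_h(t)$ for every $t \ge 0$. Unlike the bipartite case handled by Kubo and Namba, I cannot rely on eigenvalue symmetry, so I expect to need a more hands-on argument. The boundary behavior is controlled: $f_h(0) \ge 0$ corresponds exactly to the necessary condition $r_0 \le$ (value forced by $Q_{i,h}$ summing correctly), and as $t \to \infty$ the dominant term comes from the pair containing $\theta_0 = k$ with the largest companion exponent, whose coefficient I expect to be positive because $Q_{0,h} = 1$ while $m_0 = 1$, making $c_{0j} = (k-\theta_j)(m_j - Q_{j,h})$, and $Q_{j,h} \le m_j$ holds since $|Q_{j,h}| \le m_j$ by the positive-semidefiniteness of $E_j$. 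The strategy I would pursue is to factor out $e^{\theta_0 t/k}$ (or the dominant exponential) and argue term-by-term, or else to group the six terms into pairs that are each manifestly non-negative using the ordering $\theta_0 > \theta_1 > \theta_2 > \theta_3$ together with sign patterns of the $Q_{i,h}$ dictated by the classical parameters. If a uniform grouping fails, I would fall back on case analysis over the finitely many feasible classical-parameter families of diameter $3$ (which are essentially classified), verifying the inequality in each, though I would strongly prefer a parameter-free argument and expect that the key algebraic identity making $f_h$ a sum of squares-like positive combination is the real crux of the proof.
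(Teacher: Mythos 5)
Your setup is sound and matches the paper's starting point: the spectral expansion of $h_{u,v}(t)=H_t'(u,v)H_t(u,u)-H_t(u,v)H_t'(u,u)$ as a six-term exponential sum (your pairs $(0,j)$ are the paper's $\tfrac{\lambda_j}{n}e^{-t\lambda_j}\Delta_j$ terms and your pairs $(i,j)$ with $i,j\geq 1$ are the $\Delta_{ij}$ terms), together with the observation that for a distance-regular graph everything depends only on $\partial(u,v)\in\{1,2,3\}$. But the proposal stops exactly where the proof has to begin. The coefficients of the six exponentials are \emph{not} all non-negative --- in the paper's notation one is forced to work with, e.g., $\Delta_{13}(u,v)\leq 0$ in certain regimes --- so neither a term-by-term argument nor a grouping into manifestly non-negative pairs can succeed. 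The mechanism that actually closes the argument is absent from your plan: one multiplies $h_{u,v}$ by $e^{t(\lambda_i+\lambda_j)}$ for a carefully chosen pair of eigenvalues, invokes Nica's endpoint inequality $h_{u,v}(0)\geq 0$, and shows that the derivative of the rescaled function is non-negative; that derivative collapses, via an exact algebraic identity among the $\Delta_i$ and $\Delta_{ij}$, to $L^2(u,v)-L(u,v)(\lambda_i+\lambda_j)$ plus terms whose signs are controlled. Verifying the required sign conditions ($\Delta_i(u,v)\geq 0$ for all $i$, the sign of the relevant $\Delta_{ij}$, $\lambda_1+\lambda_2-\lambda_3\geq 0$, and $L^2(u,v)-L(u,v)(\lambda_i+\lambda_j)\geq 0$) from the classical-parameter constraints $\beta\geq 1+(1+b)\alpha$, $\alpha\geq 0$ for $b\geq 1$ and $\alpha<-1$ for $b\leq -2$ is the bulk of the work, with separate case analysis over $b\geq 1$ versus $b\leq -2$ and over the distance $\partial(u,v)$.

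Your fallback of ``case analysis over the finitely many feasible classical-parameter families of diameter $3$ (which are essentially classified)'' is not available: distance-regular graphs with classical parameters of diameter $3$ are not classified (the paper explicitly notes that relatively few classification results exist in this case), and in any event the feasible triples $(b,\alpha,\beta)$ form infinite families, so no finite check can substitute for a parameter-free inequality. The endpoint observations you do make --- non-negativity at $t=0$ and positivity of the dominant coefficient as $t\to\infty$ --- are correct but control only the two ends of the time axis; the entire difficulty lies in the intermediate range, and that is precisely what the integration-from-$t=0$ device combined with the algebraic identity is designed to handle.
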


Most distance-regular graphs related to classical groups and groups of Lie type have classical parameters. Lots of interesting graphs constructed from algebraic structures are either distance-regular graphs with  classical  parameters (see, e.g., \cite{DK05} and \cite[Tables 6.1 and 6.2]{BCN89}) or are closely related to distance-regular graphs with  classical  parameters (see, e.g., \cite{BP12} and \cite[Section 3.2]{DKT16}). We refer to Subsection \ref{sec2.2} for the precise definition of a distance-regular graph with classical parameters.
Although there exist various classification results for distance-regular graphs with classical parameters of large diameter (see, e.g. \cite{DKT16}), relatively fewer results are known for the diameter $3$ case. We believe that our investigation into the monotonic normalized heat diffusion (MNHD) property sheds new light on the structure of distance-regular graphs with classical parameters.

In \cite{Biggs82},  Biggs classified the distance-regular graphs of diameter 3 into three categories: antipodal, bipartite and primitive. We show in Section \ref{sec7} that the MNHD property holds for all \emph{antipodal} distance-regular graphs of diameter $3$. Notice that the bipartite case follows from the result of Kubo and Namba \cite{KN22}.

\subsection*{Relevance of the MNHD property in other areas}

It is very natural to study whether the MNHD property holds on a given graph. In general, it is highly concerned to understand how certain properties of solutions to differential/difference equations depend on the structure of the underlying graphs. 
Moreover, the MNHD property is very useful and relevant to other research areas. 

It is also very interesting to study the MNHD property in the setting of Riemannian geometry. Similarly, the MNHD property does not always hold for Riemannian manifolds \cite[Appendix A]{P17}. In the other direction, it was shown that the MNHD property holds on all flat tori \cite{RS16} and some other Riemannian symmetric spaces including the $3$-dimensional hyperbolic space \cite[Section 5 and Appendix B]{P17}.
It is still open whether the MNHD property holds on all Riemannian symmetric spaces of compact type and on all hyperbolic spaces.

In a lattice in $\mathbb{R}^n$, the MNHD property reduces to a monotonicity property of the periodic Gaussian function. An extension of this monotonicity property to the non-spherical Gaussian case \cite[Proposition 4.2]{RSD17} can be used to derive a monotonicity property of the normalized heat diffusion on a torus with respect to different flat Riemannian metrics \cite{P14b, RSD17}. Such an application also leads to a key ingredient in the framework of the so-called numerical cohomology theory \cite{P14a, P17b}.


\section{Preliminaries}\label{sec2}
In this section, we collect basics about graph Laplacian, heat kernel, and distance-regular graphs with classical parameters.

\subsection{Graph Laplacian and heat kernels}\label{sec2.1}

Let $d\ge1$ be an integer and $G=(V,E)$ a simple finite connected graph with vertex set $V$ and edge set $E$. Suppose that $|V|=n\ge 2$. The graph Laplacian $L$ is a symmetric $n\times n$ matrix $L=(L(x,y))_{x,y\in V}$ defined by $L=\mathrm{Deg}-A$, where $\mathrm{Deg}$ is the diagonal vertex degree matrix and $A$ is the adjacency matrix. The constant vector $\mathbf{1}=(1,1,...,1)^{T}$ is an eigenvector of $L$ corresponding to the eigenvalue $0$. Since $G$ is connected, the eigenvalue $0$ is simple by the Perron-Frobenius theorem. Let us denote by $\mathrm{sp}(L)$ the set of all distinct eigenvalues of $L$. The graph Laplacian has the following spectral decomposition
\begin{equation}\label{eq:L_spectral}
    L=\sum_{\lambda \in \mathrm{sp}(L)}\lambda P_\lambda,
\end{equation}
where $P_\lambda$ is
the projection operator onto the eigenspace
associated with $\lambda$. The projection operator $P_0=\langle\cdot,\frac{\mathbf{1}}{\sqrt{n}}\rangle\frac{\mathbf{1}}{\sqrt{n}}$
corresponding to the simple eigenvalue $0$ is an averaging operator, where we use $\langle\cdot, \cdot\rangle$ for the standard Euclidean inner product. In matrix form, we have $P_0=\frac{1}{n}J$, with $J$ being the $n\times n$ all-$1$ matrix.

The heat kernel $H_t$ on the graph $G$ is defined in matrix form as $H_t:=e^{-tL}$. Accordingly, $H_t$ has the following spectral decomposition
\begin{equation}\label{eq:heat_spectral}
    H_t=\sum_{\lambda \in \mathrm{sp}(L)} e^{-t\lambda}P_\lambda\ ,
\end{equation}
 for all $t\ge0$. For any two distinct vertices $u, v \in V$, we define the function 
$h_{u, v} : [0, \infty) \to \mathbb{R}$ as follows, 
\begin{equation}\label{eq:huvt1}
   h_{u, v}(t):=
H_t'(u, v)H_t(u, u) - H_t(u, v)H_t'(u, u). 
\end{equation}

Here $H_t'$ stands for the derivative of $H_t$
with respect to $t$.
The MNHD property is equivalent to  that $h_{u,v}(t)\geq 0$ for any $t\geq 0$. Nica observed that this is always true at $t=0$.
\begin{lem}[{\cite[Lemma 3.2]{NB20}}]\label{lemma:huv0}
 For any two distinct vertices $u,v$, it holds that  $ h_{u, v}(0)\ge 0$.
\end{lem}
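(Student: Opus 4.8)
The plan is to evaluate $h_{u,v}$ directly at $t=0$ by inserting the initial values of the heat kernel and its derivative into the defining expression \eqref{eq:huvt1}; the whole quantity will collapse to a single entry of the graph Laplacian, and its sign is then forced by the structure $L=\mathrm{Deg}-A$.

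First I would record the two ingredients. From $H_t=e^{-tL}$ we have $H_0=I$, the identity matrix, and differentiating the spectral form \eqref{eq:heat_spectral} (equivalently, the matrix exponential) gives $H_t'=-Le^{-tL}$, so that $H_0'=-L$. In particular, for the distinct vertices $u,v$ these yield $H_0(u,u)=1$, $H_0(u,v)=0$, $H_0'(u,u)=-L(u,u)$ and $H_0'(u,v)=-L(u,v)$.

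Next I would substitute these into \eqref{eq:huvt1}. The product $H_0(u,v)H_0'(u,u)$ vanishes because $H_0(u,v)=0$, so only the first term survives and we obtain $h_{u,v}(0)=H_0'(u,v)\cdot H_0(u,u)=-L(u,v)$. Finally, since $L=\mathrm{Deg}-A$ and $u\neq v$, the off-diagonal Laplacian entry is $L(u,v)=-A(u,v)$, whence $h_{u,v}(0)=A(u,v)\ge 0$, the adjacency entry being $0$ or $1$.

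There is essentially no obstacle here: once the initial conditions $H_0=I$ and $H_0'=-L$ are in place, nonnegativity is immediate, and the sign comes for free from the fact that off-diagonal Laplacian entries are non-positive. The only mild point worth recording is that the argument actually gives the exact value $h_{u,v}(0)=A(u,v)$, so the inequality is an equality (namely $0$) precisely when $u$ and $v$ are non-adjacent; this costs nothing but shows the bound is sharp.
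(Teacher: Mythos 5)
Your computation is correct: $H_0=I$ and $H_0'=-L$ give $h_{u,v}(0)=-L(u,v)=A(u,v)\ge 0$ for distinct $u,v$, which is exactly the content of the cited result \cite[Lemma 3.2]{NB20}, and the paper itself offers no independent proof beyond that citation. Your argument is the standard one and the sharpness remark (equality precisely for non-adjacent pairs) is a correct bonus observation.
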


Let us list all $\lambda\in \mathrm{sp}(L)$ as follows:
\begin{equation}\label{eq:eig_list}
    0<\lambda_1<\lambda_2<\cdots<\lambda_s.
\end{equation}
Then, we have the following expression of $h_{u,v}(t)$, see, e.g.,  \cite[(3.2)]{KN22}.
\begin{lem}\label{lemma:huvt}
For any two distinct vertices $u,v$ and $t\geq 0$, we have
\begin{equation}\label{eq:huvt}
    h_{u,v}(t)=\sum_{i=1}^s\frac{\lambda_i}{n}e^{-t\lambda_i}\Delta_i(u,v)+\sum_{1\leq i<j\leq s}(\lambda_j-\lambda_i)e^{-t(\lambda_i+\lambda_j)}\Delta_{ij}(u,v),
\end{equation}
where for any $i,j=1,\ldots,s$,
\begin{align*}
\Delta_i(u,v)&:=P_{\lambda_i}(u,u)-P_{\lambda_i}(u,v),\\
\Delta_{ij}(u,v)&:=P_{\lambda_i}(u,v)P_{\lambda_j}(u,u)-P_{\lambda_j}(u,v)P_{\lambda_i}(u,u).
\end{align*}
\end{lem}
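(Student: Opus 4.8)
The plan is to derive the identity by substituting the spectral decomposition of $H_t$ directly into the definition \eqref{eq:huvt1} of $h_{u,v}(t)$ and then reorganizing the resulting terms; this is an algebraic identity valid for every $t\ge 0$, so no analytic subtleties arise. First I would use that $0$ is a simple eigenvalue with projection $P_0=\frac{1}{n}J$, so that the decomposition \eqref{eq:heat_spectral} and the listing \eqref{eq:eig_list} give, for any vertices $x,y$,
\begin{equation*}
H_t(x,y)=\frac{1}{n}+\sum_{i=1}^{s}e^{-t\lambda_i}P_{\lambda_i}(x,y).
\end{equation*}
Differentiating in $t$ annihilates the constant term and yields
\begin{equation*}
H_t'(x,y)=-\sum_{i=1}^{s}\lambda_i e^{-t\lambda_i}P_{\lambda_i}(x,y).
\end{equation*}
Writing $a_i:=P_{\lambda_i}(u,u)$ and $b_i:=P_{\lambda_i}(u,v)$, these four expressions are exactly what feeds into $H_t'(u,v)H_t(u,u)-H_t(u,v)H_t'(u,u)$.

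Next I would expand this product and sort the terms into two groups. The terms carrying the constant factor $\frac1n$ (originating from $P_0$) collect into $\frac{1}{n}\sum_{i=1}^{s}\lambda_i e^{-t\lambda_i}(a_i-b_i)$, which is precisely the first sum $\sum_{i=1}^{s}\frac{\lambda_i}{n}e^{-t\lambda_i}\Delta_i(u,v)$ in the claim. The remaining terms form the double sum $\sum_{i,j}(\lambda_j-\lambda_i)e^{-t(\lambda_i+\lambda_j)}b_ia_j$. Here the diagonal contributions $i=j$ vanish because of the antisymmetric weight $\lambda_j-\lambda_i$, and grouping the $(i,j)$ and $(j,i)$ contributions for each pair $i<j$ produces the factor $b_ia_j-b_ja_i=\Delta_{ij}(u,v)$, giving exactly $\sum_{1\le i<j\le s}(\lambda_j-\lambda_i)e^{-t(\lambda_i+\lambda_j)}\Delta_{ij}(u,v)$.

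Since the entire argument is a finite algebraic manipulation, there is no genuine conceptual obstacle. The only step demanding a little care is the symmetrization of the double sum: one must relabel the index pair $(i,j)\mapsto(j,i)$ and track the accompanying sign correctly to recover the antisymmetric combination $\Delta_{ij}$. I would therefore expect that relabelling, rather than any substantive difficulty, to be the point most prone to a computational slip.
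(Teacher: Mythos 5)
Your proposal is correct and is exactly the "straightforward calculation" from the spectral decomposition \eqref{eq:heat_spectral} that the paper invokes: splitting off the $P_0=\frac1n J$ term, expanding the product in \eqref{eq:huvt1}, and antisymmetrizing the double sum over index pairs. No difference in approach and no gaps.
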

\begin{proof}
    This follows from \eqref{eq:heat_spectral} by a straightforward calculation.
\end{proof}

\subsection{Heat kernels on regular graphs with four distinct eigenvalues}\label{sec2.3}
We consider a simple finite connected $d$-regular graph $G=(V,E)$ with $4$ distinct Laplacian eigenvalues, i.e., $s=3$ in \eqref{eq:eig_list}.
In this particular setting, we have the following explicit expressions for the functions $\Delta_i(u,v)$ and $\Delta_{ij}(u,v)$ due to Kubo and Namba \cite{KN22}.

\begin{lem}[{\cite[Lemmas 3 and 4]{KN22}}]\label{lemma:Deltaijs}
    Let $G=(V,E)$ be a simple finite connected $d$ regular graph with $4$ distinct Laplacian eigenvalues $0<\lambda_1<\lambda_2<\lambda_3.$
Denote by \begin{align*}         C_1=\frac{1}{(\lambda_2-\lambda_1)(\lambda_3-\lambda_1)},\,\, C_2 = \frac{1}{(\lambda_3-\lambda_2)(\lambda_1-\lambda_2)},\,\,\text{and}\,\,\,
C_3= \frac{1}{(\lambda_1-\lambda_3)(\lambda_2-\lambda_3)}.
    \end{align*}
    Then, for any two distinct vertices $u, v \in V$, we have
    \begin{align}
    \Delta_1(u, v)
    &=C_1
    \{ d^2+d - L^2(u, v) +(L(u,v)- d)(\lambda_2+\lambda_3)
   +\lambda_2 \lambda_3\},\label{D1}
\\
        \Delta_2(u, v) &=C_2
    \{ d^2+d - L^2(u, v)+(L(u,v) - d)(\lambda_1+\lambda_3)
   +\lambda_1 \lambda_3\}, \label{D2}
\\
    \Delta_3(u, v)&=C_3
    \{ d^2+d - L^2(u, v) +(L(u,v)- d)(\lambda_1+\lambda_2)
    +\lambda_1 \lambda_2\},\label{D3}
\end{align}
and for $\{i,j,k\}=\{1,2,3\}$,
    \begin{align}
        \Delta_{ij}(u, v)
        =C_iC_j(\lambda_j-\lambda_i)
        &\bigg\{\left(d+\frac{1-n}{n}\lambda_k\right)L^2(u,v)   \notag
        \\&-\left(d^2+d+\frac{1-n}{n}\lambda_k ^2\right)L(u,v) 
        -\frac{1}{n}\left((d^2+d)\lambda_k-d\lambda_k ^2\right)\bigg\}.\label{Dij}
    \end{align}
    \end{lem}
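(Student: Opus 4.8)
The plan is to express each spectral projection $P_{\lambda_i}$ as an explicit degree-two polynomial in the Laplacian $L$ and then read off the required matrix entries directly. The starting point is the triple of matrix identities valid for a graph whose eigenvalues are exactly $0,\lambda_1,\lambda_2,\lambda_3$, obtained from the resolution of the identity $\sum_\lambda P_\lambda=I$, the spectral decomposition \eqref{eq:L_spectral}, its square, and $P_0=\frac{1}{n}J$:
\begin{equation*}
\sum_{i=1}^3 P_{\lambda_i}=I-\frac{1}{n}J,\qquad \sum_{i=1}^3\lambda_iP_{\lambda_i}=L,\qquad \sum_{i=1}^3\lambda_i^2P_{\lambda_i}=L^2.
\end{equation*}
This is a Vandermonde system in the unknowns $P_{\lambda_1},P_{\lambda_2},P_{\lambda_3}$; inverting it by Lagrange interpolation (the solution weights are the coefficients of the quadratic $p_i$ with $p_i(\lambda_i)=1$ and $p_i(\lambda_j)=p_i(\lambda_k)=0$) yields, for $\{i,j,k\}=\{1,2,3\}$,
\begin{equation*}
P_{\lambda_i}=C_i\Bigl[\lambda_j\lambda_k\Bigl(I-\frac{1}{n}J\Bigr)-(\lambda_j+\lambda_k)L+L^2\Bigr].
\end{equation*}
Crucially this involves only $I,J,L$ and $L^2$, so no $L^3$ appears and every matrix entry below is controlled by $L(u,v)$ and $L^2(u,v)$ alone.

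Next I would record the relevant entries for a $d$-regular graph. On the diagonal, $I(u,u)=1$, $J(u,u)=1$, $L(u,u)=d$ and $L^2(u,u)=\sum_{w}L(u,w)^2=d^2+d$; off the diagonal, $I(u,v)=0$, $J(u,v)=1$, while $L(u,v)$ and $L^2(u,v)$ are kept as free quantities. Substituting into the displayed formula for $P_{\lambda_i}$ gives closed forms for $P_{\lambda_i}(u,u)$ and $P_{\lambda_i}(u,v)$. In forming $\Delta_i(u,v)=P_{\lambda_i}(u,u)-P_{\lambda_i}(u,v)$ the $\frac{1}{n}J$ contribution drops out, since it is the same on and off the diagonal, leaving exactly $C_i\{d^2+d-L^2(u,v)+(L(u,v)-d)(\lambda_j+\lambda_k)+\lambda_j\lambda_k\}$; this is \eqref{D1}--\eqref{D3}.

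For the second family the identity $\Delta_{ij}(u,v)=P_{\lambda_i}(u,v)P_{\lambda_j}(u,u)-P_{\lambda_j}(u,v)P_{\lambda_i}(u,u)$ must be expanded. Since each $P_{\lambda_i}(u,v)$ is affine in $L(u,v)$ and $L^2(u,v)$ while each $P_{\lambda_i}(u,u)$ is a constant, $\Delta_{ij}$ is again affine in these two variables, so it suffices to match the coefficients of $L^2(u,v)$, of $L(u,v)$, and the constant term against \eqref{Dij}. Writing $\sigma_i=\lambda_j+\lambda_k$ and $\pi_i=\lambda_j\lambda_k$, these three coefficients reduce to the symmetric combinations $\pi_j-\pi_i$, $\sigma_j-\sigma_i$ and $\pi_i\sigma_j-\pi_j\sigma_i$ weighted by constants built from $d$ and $\frac{1}{n}$; the elementary identities $\pi_j-\pi_i=-\lambda_k(\lambda_j-\lambda_i)$, $\sigma_j-\sigma_i=-(\lambda_j-\lambda_i)$ and $\pi_i\sigma_j-\pi_j\sigma_i=\lambda_k^2(\lambda_j-\lambda_i)$ then let one factor out the common $(\lambda_j-\lambda_i)$ and collapse everything into the stated form. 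I expect this last expansion to be the only real obstacle: it is genuinely bilinear, and the constant term, which carries all the $\frac{1}{n}$ contributions, is the most error-prone piece, so the key is to organise the bookkeeping around these three symmetric-function identities rather than expanding blindly.
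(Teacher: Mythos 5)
Your proposal is correct and follows essentially the same route as the paper's proof: solve the system $\sum_i P_{\lambda_i}=I-P_0$, $\sum_i\lambda_i^kP_{\lambda_i}=L^k$ ($k=1,2$) to write each $P_{\lambda_i}$ as $C_i\{L^2-(\lambda_j+\lambda_k)L+\lambda_j\lambda_k(I-\tfrac1nJ)\}$, then substitute $L(u,u)=d$, $L^2(u,u)=d^2+d$ and expand. The symmetric-function bookkeeping you describe for $\Delta_{ij}$ checks out and is just an organized version of the "straightforward calculation" the paper invokes.
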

\begin{proof}
  The expressions \eqref{D1}-\eqref{D3} are obtained in \cite[Proof of Lemma 3]{KN22} and equation \eqref{Dij} is a rearrangement of  \cite[(3.4)]{KN22}. For readers' convenience, we briefly recall  the key steps of the proof. Solving the equations $I=P_0+P_{\lambda_1}+P_{\lambda_2}+P_{\lambda_3}$ and $L^k=\lambda_1^kP_{\lambda_1}+\lambda_2^kP_{\lambda_2}+\lambda_3^kP_{\lambda_3}$, $k=1,2$ leads to the following expressions for the projection operators:
  \begin{align*}
P_{\lambda_1}
&= C_1
\big\{L^2-(\lambda_2+\lambda_3)L
+\lambda_2\lambda_3(I-P_0)\big\}, \\
P_{\lambda_2}
&= C_2
\big\{ L^2-(\lambda_1+\lambda_3)L
+\lambda_1\lambda_3(I-P_0)\big\}, \\
P_{\lambda_3}
&= C_3
\big\{ L^2-(\lambda_1+\lambda_2)L
+\lambda_1\lambda_2(I-P_0)\big\}.
\end{align*}
Then Lemma \ref{lemma:Deltaijs} follows by a straightforward calculation, noticing that $L(u,u)=d$ and $L^2(u,u)=d^2+d$.
\end{proof}

\subsection{ Distance-regular graph with classical parameters}\label{sec2.2}

A connected graph $G=(V,E)$ of diameter $D$ is
{\em distance-regular} if and only if it is regular of degree $d$, and if there exist constants $c_i, a_i, b_i$, known as  intersection  numbers, such that for all
$i=0,1,\dots, D$, and  for any two vertices  $x,y \in V$ at distance
$i=d(x,y)$, among the neighbors of $y$, there are $c_i$ at
distance $i-1$ from $x$, $a_i$ at distance $i$, and $b_i$ at
distance $i+1$. It follows that $d=b_0$, and $a_i=d-b_i-c_i$ for all $i=0,1,\dots,
D$.  These constants are commonly listed in
the so-called {\em intersection array}
$$\{b_0,b_1,\dots,b_{D-1};c_1,c_2,\dots,c_D\}.$$

Observe that $b_D=0$ and $c_0=0$ are excluded from this array, while $c_1=1$
is included, and all numbers in the intersection array are positive
integers. Furthermore, the number of vertices can be derived from the intersection
array.  Indeed, by counting edges $yz$ with $d(x,y)=i$ and $d(x,z)=i+1$  for each vertex $x$, it has a constant number of vertices $n_i$ at a given
distance $i$. Specifically, it follows from $n_0=1$ and
$n_{i+1}=\frac{b_in_i}{c_{i+1}}$ for all $i=0,1,\dots,D-1$ that the number of vertices $n=n_0+n_1+ \cdots + n_D$.

 The \emph{distance-regular graphs with  classical  parameters} $(D,b,\alpha,\beta)$ are the distance-regular graphs whose intersection numbers can be expressed by the four
parameters: diameter $D$ and numbers $b$, $\alpha$,
$\beta$, in the following way,

\begin{align}
b_{i}&=\left(
\begin{bmatrix}
    D\\
    1\\
\end{bmatrix}
-
\begin{bmatrix}
    i\\
    1\\
\end{bmatrix}
\right)
\left( \beta-\alpha
\begin{bmatrix}
    i\\
    1\\
\end{bmatrix}
\right)\;\;, i=0,1,\dots,D-1,\label{eq:bi}\\
c_{i}&=
\begin{bmatrix}
    i\\
    1\\
\end{bmatrix}
\left(1+\alpha
\begin{bmatrix}
   i-1\\
   1\\
\end{bmatrix}
\right)\;\;, i=1,2,\dots,D,\label{eq:ci}
\end{align}
where $ \begin{bmatrix} j\\1\\\end{bmatrix} = 1+b+b^2 +\cdots +b^{j-1} $ is the  Gaussian binomial coefficient with basis $b$. To make $b_i,c_i$ feasible parameters, there are necessary restrictions on the numbers $b,\alpha, \beta$. For example, 
the number $b$ must be an integer not equal to $0$ or $-1$ \cite[Proposition 6.2.1]{BCN89}. See also Lemma \ref{lem1} below.


\begin{prop}[{\cite[Corollary 8.4.2]{BCN89}}]\label{prop:eig}
Let $G$ be a distance-regular graph with classical parameters $(D,b,\alpha,\beta)$. Then the eigenvalues of its adjacent matrix are
$$\theta_i=\frac{b_i}{b^i}-\gauss{i}{1}, \ \ i=0,1,\cdots,D,$$
where $b_i,i=0,1,\ldots, D$ are given by \eqref{eq:bi}. 
\end{prop}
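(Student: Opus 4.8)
The plan is to reduce the computation of $\mathrm{sp}(A)$ to diagonalizing a single $(D+1)\times(D+1)$ tridiagonal matrix, then to substitute the classical parameters and verify that the claimed values are exactly its eigenvalues. First I would invoke the fundamental three-term recurrence of a distance-regular graph. Writing $A_i$ for the distance-$i$ adjacency matrix, so that $A_0=I$ and $A_1=A$, distance-regularity gives
$$A A_j = b_{j-1}A_{j-1} + a_j A_j + c_{j+1}A_{j+1}, \qquad j=0,1,\ldots,D,$$
with the conventions $A_{-1}=A_{D+1}=0$. Solving for $A_{j+1}$ shows by induction that $A_j=v_j(A)$ for a polynomial $v_j$ of degree exactly $j$ with positive leading coefficient $1/(c_1c_2\cdots c_j)$, so $I,A,\ldots,A^D$ are linearly independent and the minimal polynomial of $A$ has degree $D+1$; hence $A$ has exactly $D+1$ distinct eigenvalues. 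Reading the recurrence as the action of $A$ on the basis $\{A_0,\ldots,A_D\}$ of the Bose--Mesner algebra, these eigenvalues coincide with those of the tridiagonal intersection matrix
$$B_1=\begin{pmatrix} a_0 & b_0 & & \\ c_1 & a_1 & b_1 & \\ & \ddots & \ddots & \ddots \\ & & c_D & a_D \end{pmatrix},$$
where $a_0=0$, $b_0=d$, $c_1=1$. Thus it suffices to diagonalize $B_1$.

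Next I would substitute the classical parameters. The one key simplification is the Gaussian-binomial identity $\gauss{D}{1}-\gauss{i}{1}=b^{i}\gauss{D-i}{1}$, which turns $b_i=\left(\gauss{D}{1}-\gauss{i}{1}\right)\left(\beta-\alpha\gauss{i}{1}\right)$ into $b_i/b^{i}=\gauss{D-i}{1}\left(\beta-\alpha\gauss{i}{1}\right)$, so the candidate eigenvalues read
$$\theta_i=\frac{b_i}{b^{i}}-\gauss{i}{1}=\gauss{D-i}{1}\left(\beta-\alpha\gauss{i}{1}\right)-\gauss{i}{1}, \qquad i=0,1,\ldots,D.$$
As a sanity check, $\theta_0=\beta\gauss{D}{1}=b_0=d$ recovers the Perron eigenvalue. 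I would then verify that the $D+1$ numbers $\theta_i$ are pairwise distinct, using that a feasible $b$ satisfies $b\neq 0,-1$; distinctness ensures that once each $\theta_i$ is shown to be an eigenvalue, they already exhaust $\mathrm{sp}(A)$ and no multiplicity argument is needed at this stage.

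To prove that each $\theta_i$ is an eigenvalue of $B_1$, I would work with the standard sequence $u_0(\theta)=1$, $u_1(\theta)=\theta/d$, and $b_r u_{r+1}=(\theta-a_r)u_r-c_r u_{r-1}$, so that $\theta$ is an eigenvalue precisely when the boundary relation $c_D u_{D-1}+a_D u_D=\theta u_D$ holds at $\theta=\theta_i$. The cleanest route is to produce a closed form for the eigenvector at $\theta=\theta_i$ as a ratio of Gaussian binomials (a dual $q$-Krawtchouk-type expression) and then to check the recurrence and the boundary relation using the $q$-Pascal identities $\gauss{m}{1}=1+b\gauss{m-1}{1}=\gauss{m-1}{1}+b^{m-1}$ together with $a_r=d-b_r-c_r$. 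I expect this verification to be the main obstacle: guessing the correct eigenvector ansatz and pushing the Gaussian-binomial algebra through the recurrence is the real content, and one must treat the degenerate regimes (for instance $\alpha=0$, or very small $D$ and $b$) separately so that the closed form remains valid. Everything preceding this step is formal bookkeeping; the weight of the proof sits in this final identity check.
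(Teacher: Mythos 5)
The paper does not prove this proposition at all: it is imported verbatim as \cite[Corollary 8.4.2]{BCN89}, so there is no in-paper argument to compare against. Judged on its own terms, your proposal sets up the standard machinery correctly — the three-term recurrence $AA_j=b_{j-1}A_{j-1}+a_jA_j+c_{j+1}A_{j+1}$, the reduction of $\mathrm{sp}(A)$ to the spectrum of the tridiagonal intersection matrix $B_1$, the identity $\gauss{D}{1}-\gauss{i}{1}=b^{i}\gauss{D-i}{1}$, and the criterion that $\theta$ is an eigenvalue iff the standard sequence satisfies the terminal relation $c_Du_{D-1}+a_Du_D=\theta u_D$. All of that is correct and is exactly the route taken in \cite{BCN89}.

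However, there is a genuine gap, and you name it yourself: the verification that each $\theta_i=\frac{b_i}{b^i}-\gauss{i}{1}$ actually satisfies the eigenvalue condition for $B_1$ is never carried out. You propose to ``produce a closed form for the eigenvector at $\theta=\theta_i$ as a ratio of Gaussian binomials'' and then ``check the recurrence and the boundary relation,'' but no candidate for $u_r(\theta_i)$ is written down and no identity is verified. Everything preceding this step is generic to all distance-regular graphs and makes no use of the classical parameters \eqref{eq:bi}--\eqref{eq:ci}; the entire content specific to the proposition lives in the omitted computation. As it stands the proposal is a correct plan with the load-bearing step declared rather than executed, so it does not yet constitute a proof. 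To close it you would need to exhibit $u_r(\theta_i)$ explicitly (in \cite{BCN89} this is done via a product formula over Gaussian binomials), substitute \eqref{eq:bi}, \eqref{eq:ci} and $a_r=d-b_r-c_r$, and push the $q$-Pascal identities through the recurrence, including the distinctness check for the $\theta_i$ that your exhaustion argument relies on.
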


For more discussion on distance-regular graphs, we refer to \cite{BCN89} and \cite{DKT16}.

\subsection{Distance-regular graphs with classical parameters of diameter 3}\label{sec3}
In this subsection, we collect basics of distance-regular graphs with classical parameters $(D,b,\alpha,\beta)$ of diameter $D=3$. 

First, we derive from \eqref{eq:bi} and \eqref{eq:ci} the intersection numbers as follows.
\begin{equation}
   \label{ia} \begin{array}{ll}
        b_0=d=(1+b+b^2)\beta, &  b_1=(b+b^2)(\beta-\alpha),\\
        b_2=b^2(\beta-\alpha(1+b)),  & b_3=0,\\
        c_0=0, & c_1=1,\\
        c_2=(1+b)(1+\alpha), &  c_3=(1+b+b^2)(1+\alpha(1+b)),\\
        a_0=0, & a_1=-1 + b\alpha + b^2 \alpha + \beta,\\
        a_2=(1 + b) (-1 - \alpha + b^2 \alpha + \beta), & a_3=(1 + b + b^2) (-1-\alpha-b\alpha+\beta).\\
    \end{array}
\end{equation}
Then, we calculate the vertex number $n$ as below,
\begin{align}
 \label{n}  
 n=&1+d+\frac{db_1}{c_2}+\frac{db_1b_2}{c_2c_3}\notag\\
 =&1+d+\frac{db(\beta-\alpha)}{1+\alpha}+\frac{db(\beta-\alpha)b_2}{(1+\alpha)c_3}.
\end{align}
Applying Proposition \ref{prop:eig}, we obtain the eigenvalues of its adjacent matrix as follows,
\begin{equation}\label{eq:adj_eig}
    \theta_0=d, \theta_1=(1+b)(\beta-\alpha)-1, \theta_2=\beta-(1+b)(1+\alpha),\,\text{and}\,\,\theta_3=-(1+b+b^2),
\end{equation}
and, accordingly, the non-zero Laplacian eigenvalues $\gamma_i=d-\theta_i$ listed below,
\begin{align}\label{eq:eigenvalues}
\gamma_1=1+\alpha+b\alpha+b^2\beta,
\gamma_2=(1+b)(1+\alpha+b\beta),\,\text{and}\,\,
\gamma_3=(1+b+b^2)(1+\beta).
\end{align}

In the following, we derive some necessary constraints from \eqref{ia} on the parameters $b,\alpha,\beta$ in case $D=3$.
\begin{lem}\label{lem1}
    Let $G=(V,E)$ be a distance-regular graph with classical parameters $(3,b,\alpha,\beta)$. Then we have the following constraints:
\begin{itemize}
    \item [(i)] If $b\ge1$, then $\alpha\geq 0$.
    \item[(ii)]  If $b\le-2$, then $\alpha<-1$.
    \item [(iii)]  $\beta\ge1+(1+b)\alpha\geq 1$.
\end{itemize}
\end{lem}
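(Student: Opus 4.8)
The plan is to read off every inequality directly from the explicit intersection numbers displayed in \eqref{ia}, using two structural facts about distance-regular graphs: every entry of the intersection array $\{b_0,b_1,b_2;c_1,c_2,c_3\}$ is a \emph{positive integer}, and each $a_i=d-b_i-c_i$ is a \emph{non-negative integer}, being a count of neighbours. I would also record the elementary observation that $1+b+b^2=(b+\tfrac12)^2+\tfrac34>0$ for every real $b$, so the recurring factor $1+b+b^2$ never influences signs; and recall that $b$ is an integer with $b\neq 0,-1$, so that exactly one of $b\ge 1$ and $b\le -2$ holds, making the two cases in (i)--(ii) exhaustive.

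The cornerstone is the algebraic identity
\begin{equation*}
  c_2-b=(1+b)(1+\alpha)-b=1+\alpha(1+b),
\end{equation*}
which rewrites the quantity $1+\alpha(1+b)$ that appears inside $c_3$ and $a_3$. Since $c_3=(1+b+b^2)\bigl(1+\alpha(1+b)\bigr)>0$ and $1+b+b^2>0$, we obtain $1+\alpha(1+b)>0$, i.e.\ $c_2-b>0$. The decisive step is now integrality: $c_2$ and $b$ are integers, so $c_2-b$ is a \emph{positive integer} and therefore $c_2-b\ge 1$. Equivalently,
\begin{equation*}
  1+(1+b)\alpha=c_2-b\ge 1,\qquad\text{hence}\qquad (1+b)\alpha\ge 0 .
\end{equation*}
This single inequality already settles (i) --- when $b\ge 1$ one has $1+b>0$, so $(1+b)\alpha\ge 0$ forces $\alpha\ge0$ --- and it is exactly the lower bound $1+(1+b)\alpha\ge 1$ required in (iii).

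For (ii) I would argue directly from $c_2=(1+b)(1+\alpha)>0$: when $b\le -2$ the factor $1+b$ is strictly negative, so $1+\alpha<0$, that is $\alpha<-1$. For the remaining inequality in (iii), namely $\beta\ge 1+(1+b)\alpha$, I would invoke $a_3\ge 0$: since $a_3=(1+b+b^2)\bigl(\beta-1-(1+b)\alpha\bigr)$ and $1+b+b^2>0$, non-negativity of $a_3$ gives $\beta-1-(1+b)\alpha\ge 0$. Combined with the lower bound established above, this completes (iii).

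The one point demanding care --- and the only place where positivity alone is insufficient --- is part (i). From $c_2>0$ with $b\ge1$ one extracts only $\alpha>-1$, and from $c_3>0$ only $\alpha>-1/(1+b)$, neither of which reaches $\alpha\ge0$. It is precisely the \emph{integer} jump $c_2-b\ge 1$, as opposed to the merely real bound $c_2-b>0$, that closes this gap. Accordingly, the main obstacle is to resist using only positivity of the intersection numbers; the proof must explicitly use that $c_2,b\in\mathbb{Z}$.
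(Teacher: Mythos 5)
Your proof is correct and follows essentially the same route as the paper: both arguments extract the sign of $\alpha$ from $c_2>0$ when $b\le-2$, use the integrality of $c_2$ and $b$ to upgrade the strict inequality $1+(1+b)\alpha>0$ (coming from $c_3>0$) to $1+(1+b)\alpha\ge 1$, and obtain $\beta\ge 1+(1+b)\alpha$ from $a_3\ge 0$. Your packaging of the integrality step as ``$c_2-b$ is a positive integer'' is just a rephrasing of the paper's observation that $(1+b)\alpha$ is an integer exceeding $-1$.
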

\begin{proof}
    Since $c_2=(1+b)(1+\alpha)>0$, we obtain $\alpha<-1$ if $b\leq -2$. Moreover, since $b$ is an integer, we read from $c_2$ that $(1+b)\alpha$ is an integer. Via $c_3=(1+b+b^2)(1+\alpha(1+b))>0$, we derive that $\alpha(1+b)>-1$ if $b\geq 1$.  Therefore, we have $\alpha(1+b)\geq 0$ if $b\geq 1$. This proves $(i)$. The constraint $\beta\geq 1+(1+b)\alpha$ follows from the facts $a_3=(1+b+b^2)(-1-\alpha-b\alpha+\beta)\geq 0$. Applying $(i)$ and $(ii)$, we further deduce $\beta\geq 1+(1+b)\alpha\geq 1$. This completes the proof.
\end{proof}

\begin{rmk} Lemma \ref{lem1} $(i)$ is taken from Lv and Koolen \cite{LK24}. In fact, Lemma \ref{lem1} $(i)$ and $(ii)$ hold for any distance-regular graph with classical parameters of diameter $D\geq 3$. The proof of Lemma \ref{lem1} $(i)$ and $(ii)$ relies only on the expressions of $c_2$ and $c_3$ in \eqref{ia}, which holds for any diameter $D\geq 3$.
\end{rmk}

\section{The MNHD property for regular graphs with four distinct eigenvalues}\label{sec4}

In this section, we derive a sufficient condition for the MNHD property to hold on regular graphs with four distinct eigenvalues. 

Our main Theorem \ref{condition} in this section relies on the following key observation. 
\begin{lem}\label{identity}
For any three distinct numbers $\lambda_i, \lambda_j, \lambda_k \in \mathbb{R}$ with $\{i,j,k\}=\{1,2,3\}$ and any numbers $L(u,v), L^2(u,v)\in \mathbb{R}$, let $\Delta_i(u,v), i=1,2,3$ and $\Delta_{ij}(u,v)$ be defined in terms of $\lambda_i, \lambda_j,\lambda_k, L(u,v), L^2(u,v)$ as in \eqref{D1}-\eqref{Dij}. Then it holds that
\begin{align}
&\frac{\lambda_i\lambda_j}{n}\Delta_i(u,v) + \frac{\lambda_j\lambda_i}{n}\Delta_j(u,v) + \frac{\lambda_k(\lambda_i + \lambda_j - \lambda_k)}{n}\Delta_k(u,v)\notag\\
&\hspace{6cm}- (\lambda_k - \lambda_i)(\lambda_k - \lambda_j)(\Delta_{ik}(u,v)+\Delta_{jk}(u,v))\notag \\
=& L^2(u,v) - L(u,v)(\lambda_i + \lambda_j). \label{eq:identity}
\end{align}
\end{lem}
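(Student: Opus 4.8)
The statement is a polynomial identity in $\lambda_1,\lambda_2,\lambda_3$, $L(u,v)$, $L^2(u,v)$ and the parameters $d,n$; its striking feature is that \emph{all} dependence on $d$ and $n$ must cancel, since the right-hand side contains neither. The plan is to avoid expanding \eqref{D1}--\eqref{Dij} monomial by monomial and instead re-expose the projection structure behind them. Write $p_r:=P_{\lambda_r}(u,v)$ and $q_r:=P_{\lambda_r}(u,u)$ for the expressions obtained from the projection formulas in the proof of Lemma~\ref{lemma:Deltaijs}, using $L(u,u)=d$, $L^2(u,u)=d^2+d$, and the values $(I-P_0)(u,v)=-\tfrac1n$, $(I-P_0)(u,u)=1-\tfrac1n$; this is exactly how \eqref{D1}--\eqref{Dij} were produced, so by construction $\Delta_r=q_r-p_r$ and $\Delta_{rs}=p_rq_s-p_sq_r$. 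I would first record the three partial-fraction (divided-difference) identities for three nodes, namely $\sum_r C_r=0$, $\sum_r C_r\lambda_r=0$ and $\sum_r C_r\lambda_r^2=1$. These immediately yield the two global sums $\sum_r p_r=-\tfrac1n$ and $\sum_r q_r=1-\tfrac1n$, and in particular $\sum_r q_r-\sum_r p_r=1$.

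Next I would treat the two mixed terms together. Using $p_i+p_j=\sum_r p_r-p_k$ and $q_i+q_j=\sum_r q_r-q_k$, the $p_kq_k$ contributions cancel and the cross part collapses to $\Delta_{ik}+\Delta_{jk}=q_k\big(\sum_r p_r\big)-p_k\big(\sum_r q_r\big)$. Since $(\lambda_k-\lambda_i)(\lambda_k-\lambda_j)=C_k^{-1}$, the prefactor $-(\lambda_k-\lambda_i)(\lambda_k-\lambda_j)$ turns this into $C_k^{-1}\big(\tfrac1n q_k+(1-\tfrac1n)p_k\big)$, and because $q_k/C_k$ and $p_k/C_k$ are precisely the bracketed factors of the projection formulas, the cross contribution is now an explicit affine function of $L(u,v),L^2(u,v),d$.

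For the three diagonal terms I would use $\Delta_i+\Delta_j=1-\Delta_k$ (immediate from $\sum_r\Delta_r=1$) to rewrite the diagonal sum as $\tfrac{\lambda_i\lambda_j}{n}+\tfrac{\Delta_k}{n}\big(\lambda_k(\lambda_i+\lambda_j-\lambda_k)-\lambda_i\lambda_j\big)$. The quadratic identity
\[
\lambda_k(\lambda_i+\lambda_j-\lambda_k)-\lambda_i\lambda_j=-(\lambda_k-\lambda_i)(\lambda_k-\lambda_j)=-C_k^{-1}
\]
replaces the bracket, and again $\Delta_k/C_k=q_k/C_k-p_k/C_k$ is explicit. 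Adding the diagonal and cross contributions, the terms carrying $q_k/C_k$ (which hold all the $d$-dependence) appear with opposite coefficients $\pm\tfrac1n$ and cancel; the coefficients of $p_k/C_k$ add to $(1-\tfrac1n)+\tfrac1n=1$; and the two free $\pm\tfrac{\lambda_i\lambda_j}{n}$ pieces cancel as well. What remains is exactly $L^2(u,v)-(\lambda_i+\lambda_j)L(u,v)$, the right-hand side.

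There is no genuine obstacle here: the identity is proved by assembling a handful of telescoping cancellations. The only point demanding care is the bookkeeping of the $\tfrac1n$ terms, since it is precisely their interplay that forces both $d$ and $n$ to disappear. The one reusable input that must be justified (or simply cited from Lemma~\ref{lemma:Deltaijs}) is that \eqref{Dij} is indeed the rearrangement of $p_rq_s-p_sq_r$; granting this, the computation above is the whole proof. As a less illuminating fallback, one could substitute \eqref{D1}--\eqref{Dij} directly and match the coefficients of $L^2(u,v)$, of $L(u,v)$, and of the constant term on both sides, but that route obscures the reason $d$ and $n$ drop out.
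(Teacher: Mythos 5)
Your proposal is correct, and it takes a genuinely different route from the paper. The paper proves the identity by brute force: it substitutes \eqref{D1}--\eqref{D3} into the diagonal sum, collects the coefficients of $d+d^2-L^2(u,v)$, of $L(u,v)-d$, and of the constant into three sums $\mathrm{I},\mathrm{II},\mathrm{III}$ that each factor as a multiple of $(\lambda_i-\lambda_j)(\lambda_i-\lambda_k)(\lambda_j-\lambda_k)$, does the analogous expansion for the cross terms from \eqref{Dij}, and adds the two results. You instead restore the structural origin of the formulas, setting $p_r$ and $q_r$ to be the formal values of $P_{\lambda_r}(u,v)$ and $P_{\lambda_r}(u,u)$, so that $\Delta_r=q_r-p_r$ and $\Delta_{rs}=p_rq_s-p_sq_r$, and then run everything off the divided-difference identities $\sum_r C_r=\sum_r C_r\lambda_r=0$, $\sum_r C_r\lambda_r^2=1$ together with $(\lambda_k-\lambda_i)(\lambda_k-\lambda_j)=C_k^{-1}$. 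I checked your cancellation pattern: the cross contribution is $\tfrac1n\,q_k/C_k+(1-\tfrac1n)\,p_k/C_k$, the diagonal contribution is $\tfrac{\lambda_i\lambda_j}{n}-\tfrac1n\,q_k/C_k+\tfrac1n\,p_k/C_k$, and their sum is $\tfrac{\lambda_i\lambda_j}{n}+p_k/C_k=L^2(u,v)-(\lambda_i+\lambda_j)L(u,v)$, as claimed. What your approach buys is an explanation of \emph{why} all $d$- and $n$-dependence disappears (it sits entirely in $q_k/C_k$, which enters with coefficients $\pm\tfrac1n$), at the cost of one extra verification the paper does not need: since the lemma is stated for arbitrary real inputs, you must check that the \emph{formal} expression \eqref{Dij} really equals $p_iq_j-p_jq_i$ built from the formal $p_r,q_r$ (the paper only asserts \eqref{Dij} as a rearrangement of a formula from \cite{KN22}). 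That check is a short three-coefficient expansion and does go through, but it should be written out rather than merely cited, because the cited derivation is for genuine spectral projections while your argument needs the purely algebraic version. With that step supplied, your proof is complete.
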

\begin{rmk}
    Notice that in identity \eqref{eq:identity}, the three distinct numbers $\lambda_i, \lambda_j, \lambda_k \in \mathbb{R}$ are not required to be eigenvalues of the graph Laplacian $L$; the two numbers $L(u,v)$ and $L^2(u,v)$ are not required to be obtained from the graph Laplacian $L$ and its square $L^2$. We only use the formal definitions of $\Delta_i$ and $\Delta_{ij}$ in \eqref{D1}-\eqref{Dij}. 
\end{rmk}
\begin{proof}
We break down the calculations into two parts.
First, we consider the terms involving \(\Delta_i(u,v)\), \(\Delta_j(u,v)\), and \(\Delta_k(u,v)\). We calculate by inserting \eqref{D1}-\eqref{D3} that 
\begin{align}
&\frac{\lambda_i \lambda_j}{n} \Delta_i(u,v) + \frac{\lambda_j \lambda_i}{n} \Delta_j(u,v) + \frac{\lambda_k(\lambda_i + \lambda_j - \lambda_k) }{n} \Delta_k(u,v) \notag \\
=& \frac{\lambda_i \lambda_j \left(d + d^{2} - L^2(u,v) +(L(u,v) - d)(\lambda_j + \lambda_k) + \lambda_j \lambda_k\right)}{n(\lambda_i - \lambda_j)(\lambda_i - \lambda_k)} \notag \\
&+ \frac{\lambda_i \lambda_j \left(d + d^{2} - L^2(u,v)  +(L(u,v)- d)(\lambda_i + \lambda_k) + \lambda_i \lambda_k\right)}{n(\lambda_j - \lambda_i)(\lambda_j - \lambda_k)} \notag \\
&+ \frac{\lambda_k (\lambda_i + \lambda_j - \lambda_k) \left(d + d^2 - L^2(u,v)+(L(u,v)- d)(\lambda_i + \lambda_j)  + \lambda_i \lambda_j \right)}{n(\lambda_k - \lambda_i)(\lambda_k - \lambda_j)} \notag \\
=&\frac{(d+d^2-L^2(u,v))\mathrm{I}+(L(u,v)-d)\mathrm{II}+\mathrm{III}}{n(\lambda_i-\lambda_j)(\lambda_i-\lambda_k)(\lambda_j-\lambda_k)},\notag
\end{align}
where we have
\begin{align*}
\mathrm{I}:=&\lambda_i\lambda_j(\lambda_j-\lambda_k)-\lambda_i\lambda_j(\lambda_i-\lambda_k)+\lambda_k(\lambda_i+\lambda_j-\lambda_k)(\lambda_i-\lambda_j)\\
=&
-(\lambda_i-\lambda_j)(\lambda_i-\lambda_k)(\lambda_j-\lambda_k),\\
    \mathrm{II}:=&\lambda_i\lambda_j(\lambda_j^2-\lambda_k^2)-\lambda_i\lambda_j(\lambda_i^2-\lambda_k^2)+\lambda_k(\lambda_i+\lambda_j-\lambda_k)(\lambda_i^2-\lambda_j^2)\\
    =&
    -(\lambda_i-\lambda_j)(\lambda_i-\lambda_k)(\lambda_j-\lambda_k)(\lambda_i+\lambda_j),\\
\mathrm{III}:=&\lambda_i\lambda_j^2\lambda_k(\lambda_j-\lambda_k)-\lambda_i^2\lambda_j\lambda_k(\lambda_i-\lambda_k)+\lambda_i\lambda_j\lambda_k(\lambda_i+\lambda_j-\lambda_k)(\lambda_i-\lambda_j)\\=&0.
\end{align*}
Putting together yields that
\begin{align}
   &\frac{\lambda_i \lambda_j}{n} \Delta_i(u,v) + \frac{\lambda_j \lambda_i}{n} \Delta_j(u,v) + \frac{\lambda_k(\lambda_i + \lambda_j - \lambda_k) }{n} \Delta_k(u,v) \notag \\
   =&-\frac{1}{n}\left\{d+d^2-L^2(u,v)+(L(u,v)-d)(\lambda_i+\lambda_j)\right\}.\label{eq:Delta_part}
\end{align}

Next, we consider the terms involving \(\Delta_{ik}(u,v)\) and \(\Delta_{jk}(u,v)\). By inserting \eqref{Dij}, we calculate that
\begin{align}
&- (\lambda_k - \lambda_i)(\lambda_k - \lambda_j) (\Delta_{ik}(u,v)+\Delta_{jk}(u,v)) \notag \\
=&\frac{(nd+(1-n)\lambda_j)L^2(u,v)-(n(d^2+d)+(1-n)\lambda_j^2)L(u,v)-(d^2+d)\lambda_j+d\lambda_j^2}{n(\lambda_i-\lambda_j)}\notag\\
&+\frac{(nd+(1-n)\lambda_i)L^2(u,v)-(n(d^2+d)+(1-n)\lambda_i^2)L(u,v)-(d^2+d)\lambda_i+d\lambda_i^2}{n(\lambda_j-\lambda_i)}\notag\\
=&\frac{1}{n}\left\{-(1-n)\left(L^2(u,v)-L(u,v)(\lambda_i+\lambda_j)\right)+d^2+d-d(\lambda_i+\lambda_j)\right\}.\label{eq:Deltaij_part}
\end{align}

Combining \eqref{eq:Delta_part} and \eqref{eq:Deltaij_part} leads to the identity \eqref{identity}.
\end{proof}

Next, we establish a sufficient condition for a regular graph with four distinct eigenvalues to have MNHD property.
\begin{thm}\label{condition}
Let $G$ be a regular graph with four distinct Laplacian eigenvalues \[0 < \lambda_1 < \lambda_2 < \lambda_3.\] Let $u,v$ be two distinct vertices. Suppose that $\Delta_i(u, v)\geq 0, i=1,2,3$, and one of the following conditions holds,
\begin{itemize}
    \item [(i)] $L^2(u,v)-L(u,v)(\lambda_2+\lambda_3)\geq 0$, and $\Delta_{12}(u, v) \geq 0$,
    \item [(ii)] $L^2(u,v)-L(u,v)(\lambda_1+\lambda_2)\geq 0$, $\Delta_{13}(u, v) \leq 0$, and $\lambda_1+\lambda_2-\lambda_3 \geq 0$,
\item [(iii)] $L^2(u,v)-L(u,v)(\lambda_1+\lambda_3)\geq 0$, $\Delta_{23}(u, v) \geq 0$, and $\lambda_1+\lambda_2-\lambda_3 \geq 0$.
\end{itemize}
Then the function $r_t(u,v):=\frac{H_t(u,v)}{H_t(u,u)}$ is monotonically non-decreasing in $t$.
\end{thm}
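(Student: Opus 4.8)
The plan is to prove the equivalent statement that $h_{u,v}(t)\ge 0$ for every $t\ge 0$, where $h_{u,v}$ is the function from Lemma~\ref{lemma:huvt}. With $s=3$ this is the six-term exponential sum
\begin{equation*}
h_{u,v}(t)=\sum_{i=1}^{3}\frac{\lambda_i}{n}\Delta_i(u,v)\,e^{-t\lambda_i}+\sum_{1\le i<j\le 3}(\lambda_j-\lambda_i)\Delta_{ij}(u,v)\,e^{-t(\lambda_i+\lambda_j)}.
\end{equation*}
The three diagonal terms are non-negative, since $\lambda_i>0$ and $\Delta_i(u,v)\ge 0$ by hypothesis, so the whole difficulty is concentrated in the three pair terms $e^{-t(\lambda_i+\lambda_j)}$, at most one of which is assigned a definite sign in each of (i)--(iii).

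Two elementary facts anchor the argument. First, for $t\ge 0$ the exponentials are monotone in the exponent, $e^{-at}\ge e^{-bt}\ge 0$ whenever $0\le a\le b$, so a positive term may dominate a negative term that decays at least as fast; organizing this systematically through summation by parts reduces matters to controlling partial sums of the coefficients ordered by increasing exponent. Second, $h_{u,v}(0)\ge 0$ by Lemma~\ref{lemma:huv0}, while as $t\to\infty$ the slowest mode $\frac{\lambda_1}{n}\Delta_1(u,v)e^{-t\lambda_1}\ge 0$ dominates, so $h_{u,v}$ is non-negative at both ends and the real work lies in the intermediate range, where the two fastest-decaying pair terms (at exponents $\lambda_1+\lambda_3$ and $\lambda_2+\lambda_3$) must be absorbed by the slower terms.

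The identity~\eqref{eq:identity} is the engine for that intermediate range: I would apply it with $(i,j,k)$ chosen so that $\lambda_i+\lambda_j$ is exactly the pair for which $L^2(u,v)-L(u,v)(\lambda_i+\lambda_j)\ge 0$ is assumed. Since the identity rewrites this quadratic quantity as a fixed linear combination of the $\Delta_\ell$'s and of $\Delta_{ik}+\Delta_{jk}$, substituting it lets me trade a fast-decaying pair term for three ingredients: (a) the controlled quantity $L^2-L(\lambda_i+\lambda_j)\ge 0$, (b) the sign-prescribed $\Delta_{ij}$, and (c) a combination of diagonal terms whose coefficients $\lambda_i\lambda_j$ and $\lambda_k(\lambda_i+\lambda_j-\lambda_k)$ are non-negative. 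The extra hypothesis $\lambda_1+\lambda_2-\lambda_3\ge 0$ in cases (ii) and (iii) is precisely what guarantees the last coefficient is non-negative, and the prescribed sign on the relevant $\Delta_{ij}$ is what handles the one pair term not reached by the identity. Feeding the resulting diagonal surplus back through the monotonicity of the exponentials should then yield $h_{u,v}(t)\ge 0$.

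I expect the genuine obstacle to be this recombination step. The intermediate combination and the left-hand side of~\eqref{eq:identity} are assembled from the same data but carry different coefficients and sit at different decay rates, so they cannot simply be equated; one must keep careful track of which slow-decaying diagonal contribution is available to dominate which fast-decaying pair contribution, and then verify that the three pieces of sign information --- the sign of $L^2-L(\lambda_i+\lambda_j)$, the sign of the designated $\Delta_{ij}$, and, where present, $\lambda_1+\lambda_2-\lambda_3\ge 0$ --- are jointly sufficient. It is exactly the differing availability of these ingredients for the three possible ``controlled'' pairs that forces the case split into (i), (ii), (iii).
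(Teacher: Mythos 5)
You have assembled the right ingredients --- identity \eqref{eq:identity}, the base case $h_{u,v}(0)\ge 0$ from Lemma \ref{lemma:huv0}, and a correct reading of which hypothesis controls which coefficient --- but the proof is not carried out: you explicitly leave the ``recombination step'' unresolved, and that step is the entire content of the argument. Your difficulty is real. The coefficients $\tfrac{\lambda_\ell}{n}\Delta_\ell$ and $(\lambda_j-\lambda_i)\Delta_{ij}$ appearing in $h_{u,v}(t)$ do \emph{not} match the coefficients $\tfrac{\lambda_i\lambda_j}{n}\Delta_i$, $\tfrac{\lambda_k(\lambda_i+\lambda_j-\lambda_k)}{n}\Delta_k$ and $(\lambda_k-\lambda_i)(\lambda_k-\lambda_j)(\Delta_{ik}+\Delta_{jk})$ in \eqref{eq:identity}, so the identity cannot be substituted into $h_{u,v}(t)$ term by term, and a summation by parts on the raw exponential sum does not produce these particular partial sums either.

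The missing idea is a single maneuver that fixes all of this at once. For the pair $(i,j)$ with $L^2(u,v)-L(u,v)(\lambda_i+\lambda_j)\ge 0$, set $F(t):=e^{t(\lambda_i+\lambda_j)}h_{u,v}(t)$ and differentiate. Differentiation annihilates the now-constant term coming from the one pair whose sign is not hypothesized (the $\Delta_{23}$ term in case (i)), and it multiplies each surviving coefficient by its new exponent, which turns $\tfrac{\lambda_\ell}{n}\Delta_\ell$ into exactly $\tfrac{\lambda_\ell(\lambda_i+\lambda_j-\lambda_\ell)}{n}\Delta_\ell$ and the remaining pair coefficients into exactly those of \eqref{eq:identity}. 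After factoring out the exponential with the smallest exponent, every surviving exponent is nonnegative --- this is where $\lambda_1+\lambda_2-\lambda_3\ge 0$ enters in cases (ii) and (iii), doing double duty for the sign of the $\Delta_k$ coefficient and for the sign of an exponent --- so each term with a sign-controlled coefficient is bounded below by its value at exponent $0$, and the resulting lower bound for $e^{-t(\cdot)}F'(t)$ is precisely the left side of \eqref{eq:identity}, i.e.\ $L^2(u,v)-L(u,v)(\lambda_i+\lambda_j)\ge 0$. Hence $F'\ge 0$, so $F(t)\ge F(0)=h_{u,v}(0)\ge 0$ and $h_{u,v}(t)=e^{-t(\lambda_i+\lambda_j)}F(t)\ge 0$. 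Without this step your outline does not close.
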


\begin{proof}
Recall that the monotonic property of $r_{t}(u,v)$ in $t$ is equivalent to the non-negativity of the function $h_{u,v}(t)$ defined in \eqref{eq:huvt1} and \eqref{eq:huvt}. 

\textbf{Case  $(i)$}:
Consider the function $F(t):=e^{t(\lambda_2+\lambda_3)}h_{u,v}(t)$. Employing Lemma \ref{lemma:huvt}, we derive
    \begin{align*}
       F(t)=&\frac{\lambda_1}{n}\Delta_1(u,v)e^{t(\lambda_2+\lambda_3-\lambda_1)}+\frac{\lambda_2}{n}\Delta_2(u,v)e^{t\lambda_3}+\frac{\lambda_3}{n}\Delta_3(u,v)e^{t\lambda_2}\\&+(\lambda_2-\lambda_1)\Delta_{12}(u,v)e^{t(\lambda_3-\lambda_1)}+(\lambda_3-\lambda_1)\Delta_{13}(u,v)e^{t(\lambda_2-\lambda_1)}\\&+(\lambda_3-\lambda_2)\Delta_{23}(u,v).
    \end{align*}
Then, we calculate 
\begin{align*}
       F'(t)=&\frac{\lambda_1}{n}\Delta_1(u,v)(\lambda_2+\lambda_3-\lambda_1)e^{t(\lambda_2+\lambda_3-\lambda_1)}+\frac{\lambda_2\lambda_3}{n}\Delta_2(u,v)e^{t\lambda_3}\\&+\frac{\lambda_3\lambda_2}{n}\Delta_3(u,v)e^{t\lambda_2}+(\lambda_3-\lambda_1)(\lambda_2-\lambda_1)\Delta_{12}(u,v)e^{t(\lambda_3-\lambda_1)}\\&+(\lambda_2-\lambda_1)(\lambda_3-\lambda_1)\Delta_{13}(u,v)e^{t(\lambda_2-\lambda_1)}.
    \end{align*}
Extracting the factor $e^{t(\lambda_2-\lambda_1)}$, we derive
    \begin{align*}
       F'(t)=&e^{t(\lambda_2-\lambda_1)}\bigg(\frac{\lambda_1}{n}\Delta_1(u,v)(\lambda_2+\lambda_3-\lambda_1)e^{t\lambda_3}+\frac{\lambda_3\lambda_2}{n}\Delta_2(u,v)e^{t(\lambda_3-\lambda_2+\lambda_1)}\\&+\frac{\lambda_3\lambda_2}{n}\Delta_3(u,v)e^{t\lambda_1}+(\lambda_3-\lambda_1)(\lambda_2-\lambda_1)\Delta_{12}(u,v)e^{t(\lambda_3-\lambda_2)}\\&+(\lambda_3-\lambda_1)(\lambda_2-\lambda_1)\Delta_{13}(u,v)\bigg).
    \end{align*}
   Since $\Delta_i(u,v)\geq 0, i=1,2,3$ and $\Delta_{12}(u,v)\geq 0$, we estimate 
\begin{align*}
        e^{-t(\lambda_2-\lambda_1)}F'(t)\ge&\frac{\lambda_1(\lambda_2+\lambda_3-\lambda_1)}{n}\Delta_1(u,v)+\frac{\lambda_3\lambda_2}{n}\Delta_2(u,v)+\frac{\lambda_3\lambda_2}{n}\Delta_3(u,v)\\&+(\lambda_3-\lambda_1)(\lambda_2-\lambda_1)(\Delta_{12}(u,v)+\Delta_{13}(u,v))\\=&L^2(u,v) - L(u,v)(\lambda_2 + \lambda_3).
\end{align*}
We have used the fact that $\Delta_{ij}(u,v)=-\Delta_{ji}(u,v)$ and Lemma \ref{identity} in the above estimate. By assumption that $L^2(u,v) - L(u,v)(\lambda_2 + \lambda_3)\geq 0$, we derive
 $$ F(t)\ge F(0)=h_{u,v}(0)\ge0,\  \text{for any}\  t\ge0,$$
 where we used Lemma \ref{lemma:huv0}. 
Therefore, we obtain $h_{u,v}(t)\ge0, \text{for all}\  t\ge0.$

\textbf{Case $(ii)$:} We consider the function $ G(t)=e^{t(\lambda_1+\lambda_2)}h_{u,v}(t)$. Employing Lemma \ref{lemma:huvt}, we derive
    \begin{align*}
        G(t)=&\frac{\lambda_1}{n}\Delta_1(u,v)e^{t\lambda_2}+\frac{\lambda_2}{n}\Delta_2(u,v)e^{t\lambda_1}+\frac{\lambda_3}{n}\Delta_3(u,v)e^{t(\lambda_1+\lambda_2-\lambda_3)}\\&+(\lambda_2-\lambda_1)\Delta_{12}(u,v)+(\lambda_3-\lambda_1)\Delta_{13}(u,v)e^{t(\lambda_2-\lambda_3)}\\&+(\lambda_3-\lambda_2)\Delta_{23}(u,v)e^{t(\lambda_1-\lambda_3)}.
    \end{align*}
Similarly as in Case $(i)$, we estimate
\begin{align*}
    e^{-t(\lambda_1-\lambda_3)}G'(t)
    \ge&\frac{\lambda_1\lambda_2}{n}\Delta_1(u,v)+\frac{\lambda_2\lambda_1}{n}\Delta_2(u,v)+\frac{\lambda_3(\lambda_1+\lambda_2-\lambda_3)}{n}\Delta_3(u,v)\\&+(\lambda_3-\lambda_1)(\lambda_2-\lambda_3)(\Delta_{13}(u,v)+\Delta_{23}(u,v))\\=&L^2(u,v) - L(u,v)(\lambda_1 + \lambda_2).
\end{align*}
The last identity is derived from Lemma \ref{identity}. In particular, we mention that our assumption $\lambda_1+\lambda_2-\lambda_3\geq 0$ is used here.

Therefore, we deduce that
 $$ G(t)\ge G(0)=   h_{u,v}(0)\ge0,\  \text{for any}\  t\ge0.$$
 Hence, we have $h_{u,v}(t)\ge0$, for any $t\ge0$.

\textbf{Case $(iii)$:} We consider the function $ H(t)=e^{t(\lambda_1+\lambda_3)}h_{u,v}(t)$. Employing Lemma \ref{lemma:huvt}, we derive
    \begin{align*}
        H(t)=&\frac{\lambda_1}{n}\Delta_1(u,v)e^{t\lambda_3}+\frac{\lambda_2}{n}\Delta_2(u,v)e^{t(\lambda_1+\lambda_3-\lambda_2)}+\frac{\lambda_3}{n}\Delta_3(u,v)e^{t\lambda_1}\\&+(\lambda_2-\lambda_1)\Delta_{12}(u,v)e^{t(\lambda_3-\lambda_2)}+(\lambda_3-\lambda_1)\Delta_{13}(u,v)\\&+(\lambda_3-\lambda_2)\Delta_{23}(u,v)e^{t(\lambda_1-\lambda_2)}.
    \end{align*}
Similarly as in Case $(ii)$, we estimate
\begin{align*}
   &e^{-t(\lambda_3-\lambda_2)} H'(t)\\=& \frac{\lambda_1 \lambda_3}{n} \Delta_1(u,v) e^{t \lambda_2} + \frac{\lambda_2 (\lambda_1 + \lambda_3 - \lambda_2)}{n} \Delta_2(u,v) e^{t \lambda_1}
+ \frac{\lambda_3 \lambda_1}{n} \Delta_3(u,v) e^{t(\lambda_1 + \lambda_2 - \lambda_3)} \\ &+ (\lambda_3 - \lambda_2)(\lambda_1 - \lambda_2) \Delta_{23}(u,v) e^{t(\lambda_1 - \lambda_3)}
+ \left. (\lambda_2 - \lambda_1)(\lambda_3 - \lambda_2) \Delta_{12}(u,v)\right)\\ \ge&\frac{\lambda_1\lambda_3}{n}\Delta_1(u,v)+\frac{\lambda_2(\lambda_1+\lambda_3-\lambda_2)}{n}\Delta_2(u,v)+\frac{\lambda_3\lambda_1}{n}\Delta_3(u,v)\\&+(\lambda_3-\lambda_2)(\lambda_1-\lambda_2)\Delta_{23}(u,v)+(\lambda_2-\lambda_1)(\lambda_3-\lambda_2)\Delta_{12}(u,v)\\=&L^2(u,v) - L(u,v)(\lambda_1 + \lambda_3).
\end{align*}
Notice that we have used $\lambda_1+\lambda_2-\lambda_3\geq 0$ in the inequality above and  Lemma \ref{identity} in the equality. It follows that
 $$ H(t)\ge H(0)=   h_{u,v}(0)\ge0,\  \text{for any}\  t\ge0.$$
 Hence, we have $h_{u,v}(t)\ge0$, for any $t\ge0$.
 This completes the proof.
\end{proof}

\begin{rmk}
Observe from $L=dI-A$ that
\begin{align}\label{1}
    L^2(x, y)
   \notag &=d^2I(x,y)-2dA(x,y)+A^2(x,y)\\ 
    &=\begin{cases}
        d^2+d, & \text{if $x=y$,} \\
        -2dA(x, y)
        +\sum_{w \neq x, y}A(x, w)A(w, y), &
        \text{if $x \neq y$.}
        \end{cases}
\end{align}
Therefore, for any non-adjacent vertices $u,v$, the condition $L^2(u,v)-L(u,v)(\lambda_i+\lambda_j)\geq 0$ in Theorem \ref{condition} always holds. 
\end{rmk}
\begin{rmk} Let $G$ be a regular bipartite graph with four distinct eigenvalues $0<\lambda_1<\lambda_2<\lambda_3$. Then the conditions $L^2(u,v)-L(u,v)(\lambda_i+\lambda_j)\geq 0$ and $\lambda_1+\lambda_2-\lambda_3\geq 0$ in Theorem \ref{condition} all holds true. Indeed, we have $L^2(u,v)=-2d$ for adjacent vertices $u$ and $v$ from \eqref{1}, and 
\[\lambda_1=d-\sqrt{d-\lambda}, \lambda_2=d+\sqrt{d-\lambda}, \lambda_3=2d,\]
where $\lambda:=2d(d-1)/(n-2)$ with $n$ being the number of vertices, see \cite[Propostions 3 and 4]{KN22}. Consequently, we obtain $\lambda_i+\lambda_j\geq 2d$ for any $\{i,j\}\in \{1,2,3\}$ and $\lambda_1+\lambda_2=\lambda_3$.
\end{rmk}

\section {Proof of Theorem \ref{thm1}}\label{sec:proof}
In this section, we give the proof of our main Theorem \ref{thm1}.
We first figure out the entries of matrix $L^2$ for a distance-regular graph. 

\begin{prop}\label{prop:L^2}
    Let $G=(V,E)$ be a distance-regular graph. Then we have for any $x,y\in V$ that
\begin{equation}\label{L^2}
L^{2}\left(x,y\right)=
\begin{cases}
    d^{2}+d, &\text{if}\;d\left(x,y\right)=0;\\
    -2d+a_{1}, &\text{if}\;d\left(x,y\right)=1;\\
    c_{2}, &\text{if}\;d\left(x,y\right)=2;\\
    0, &\text{if}\;d\left(x,y\right)\geq 3.\\
\end{cases}
\end{equation}
\end{prop}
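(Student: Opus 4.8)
The plan is to expand $L^2$ via the relation $L = dI - A$ and then read off each entry combinatorially. Since a distance-regular graph is regular of degree $d = b_0$, we have $L = dI - A$, hence
\[
L^2 = d^2 I - 2d\,A + A^2 .
\]
The key observation is that $A^2(x,y)$ counts the walks of length two from $x$ to $y$, i.e.\ the number of common neighbors of $x$ and $y$. I would then split into the four cases indexed by $d(x,y)$ and evaluate $I(x,y)$, $A(x,y)$, and $A^2(x,y)$ separately, using the defining intersection numbers of the distance-regular graph to count common neighbors.

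Concretely, for $x=y$ the number of closed walks of length two is the degree $d$, while $I(x,x)=1$ and $A(x,x)=0$, giving $L^2(x,x)=d^2+d$. For adjacent $x,y$ (so $d(x,y)=1$), a common neighbor is a neighbor $z$ of $y$ with $d(x,z)=1$; by the definition of the intersection numbers applied with $i=1$, there are exactly $a_1$ such vertices, and together with $A(x,y)=1$, $I(x,y)=0$ this yields $L^2(x,y)=-2d+a_1$. For $d(x,y)=2$, a common neighbor is again a neighbor $z$ of $y$ at distance $1$ from $x$; the intersection-number definition with $i=2$ counts the neighbors of $y$ at distance $i-1=1$ from $x$, which number $c_2$, and since $A(x,y)=I(x,y)=0$ we get $L^2(x,y)=c_2$. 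Finally, for $d(x,y)\ge 3$ any common neighbor would force $d(x,y)\le 2$, so there are none, giving $L^2(x,y)=0$.

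There is no genuine obstacle here; the computation is routine once $L=dI-A$ is expanded. The only point requiring care is translating the phrase \emph{common neighbor of $x$ and $y$} into the correct intersection number: one must recognize that a common neighbor of two vertices at distance $i$ is a neighbor of $y$ lying at distance $i-1$ from $x$ (yielding $c_i$) in the distance-two case, whereas for adjacent vertices it is a neighbor of $y$ lying at distance $1=i$ from $x$ (yielding $a_1$). Keeping the roles of $a_1$ and $c_2$ straight is the entire content of the argument.
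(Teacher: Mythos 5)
Your proposal is correct and follows essentially the same route as the paper: both expand $L^2=d^2I-2dA+A^2$ and then identify the common-neighbor count $A^2(x,y)$ with $d$, $a_1$, $c_2$, or $0$ according to $d(x,y)$, using the definition of the intersection numbers. No gaps.
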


\begin{proof}
This follows directly from \eqref{1}. We only need to observe that 
\begin{equation*}
    \sum_{w\neq x,y}A(x,w)A(w,y)=\begin{cases}
    a_1, &\text{if $d(x,y)=1$};\\
    c_2, &\text{if $d\left(x,y\right)=2$};\\
    0, &\text{if $d\left(x,y\right)\geq 3$},
\end{cases}
\end{equation*}
from the distance-regularity. 
\end{proof}
Next, we show two interesting properties of eigenvalues.

\begin{lem}\label{lemma:1+2-3}
    Let $G$ be a distance-regular graph with classical parameters $(3,b,\alpha,\beta)$ and Laplacian eigenvalues 
    $0<\lambda_1<\lambda_2<\lambda_3$.
    Then we have
    \[\lambda_1+\lambda_2-\lambda_3\geq 0.\]
\end{lem}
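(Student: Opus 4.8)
The plan is to work directly with the three non-zero Laplacian eigenvalues $\gamma_1,\gamma_2,\gamma_3$ recorded in \eqref{eq:eigenvalues}, rather than first sorting them into $\lambda_1<\lambda_2<\lambda_3$. The key observation is that
\[
\lambda_1+\lambda_2-\lambda_3=(\gamma_1+\gamma_2+\gamma_3)-2\lambda_3,\qquad \lambda_3=\max\{\gamma_1,\gamma_2,\gamma_3\},
\]
so it suffices to prove the three inequalities $\gamma_j+\gamma_k-\gamma_i\geq 0$ for every $\{i,j,k\}=\{1,2,3\}$: the quantity we want is exactly the one in which $\gamma_i$ is the largest eigenvalue. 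This reformulation avoids having to determine the actual ordering of the $\gamma$'s, which is awkward because it changes with the sign of $b$.

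First I would compute the three quantities from \eqref{eq:eigenvalues} and record the factorizations
\begin{align*}
\gamma_2+\gamma_3-\gamma_1 &= (1+b)^2(1+\beta),\\
\gamma_1+\gamma_3-\gamma_2 &= (1+b^2)(1+\beta),\\
\gamma_1+\gamma_2-\gamma_3 &= (1-b^2)(1-\beta)+2\alpha(1+b).
\end{align*}
The first two are products of manifestly non-negative factors, since $(1+b)^2\geq 0$, $1+b^2>0$, and $1+\beta>0$ by Lemma \ref{lem1}(iii) (which gives $\beta\geq 1$); these two inequalities are therefore immediate.

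The only genuine obstacle is the third quantity, $\gamma_1+\gamma_2-\gamma_3=(1-b^2)(1-\beta)+2\alpha(1+b)$, which is a sum rather than a single clean product, so its sign requires the parameter constraints together with a case split on the sign of $b$ (recall $b\neq 0,-1$). When $b\geq 1$, Lemma \ref{lem1}(i) gives $\alpha\geq 0$; then $1-b^2\leq 0$ and $1-\beta\leq 0$ make the product non-negative, while $2\alpha(1+b)\geq 0$, so both summands are non-negative. When $b\leq -2$, Lemma \ref{lem1}(ii) gives $\alpha<-1$; then $1-b^2<0$ and $1-\beta\leq 0$ again make the product non-negative, while $1+b<0$ and $\alpha<0$ make $2\alpha(1+b)>0$. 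Either way the third quantity is non-negative.

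Having established all three inequalities, I would conclude: since $\{\lambda_1,\lambda_2,\lambda_3\}=\{\gamma_1,\gamma_2,\gamma_3\}$ as sets and $\lambda_3$ is whichever $\gamma_i$ is maximal, the expression $\lambda_1+\lambda_2-\lambda_3$ coincides with the corresponding quantity $\gamma_j+\gamma_k-\gamma_i$ (with $\gamma_i=\lambda_3$), which we have just shown to be $\geq 0$. The one step to verify carefully in the write-up is the algebra of the three factorizations—in particular that $\gamma_1+\gamma_2-\gamma_3$ really collapses to $(1-b^2)(1-\beta)+2\alpha(1+b)$—since a sign slip there is the easiest mistake to make.
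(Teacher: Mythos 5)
Your proof is correct, and it reorganizes the argument in a way that differs from the paper's. The paper first pins down the ordering of $\gamma_1,\gamma_2,\gamma_3$ (via the differences $\gamma_3-\gamma_1=(\beta+b-\alpha)(b+1)$, $\gamma_3-\gamma_2=\beta+b^2-\alpha(b+1)$, $\gamma_2-\gamma_1=b(\beta+1)$, which yield $\gamma_1<\gamma_2<\gamma_3$ for $b\geq 1$ and $\gamma_2<\gamma_3<\gamma_1$ for $b\leq -2$), and then verifies only the one relevant combination in each case: $\gamma_1+\gamma_2-\gamma_3=2\alpha(1+b)+(b^2-1)(\beta-1)$ for $b\geq 1$ and $\gamma_3+\gamma_2-\gamma_1=(b+1)^2(\beta+1)$ for $b\leq -2$. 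You instead prove all three inequalities $\gamma_j+\gamma_k-\gamma_i\geq 0$ and observe that the desired quantity is the one in which $\gamma_i$ is maximal, which sidesteps the ordering entirely; your three factorizations check out (note $(1-b^2)(1-\beta)=(b^2-1)(\beta-1)$, so your hard case is literally the paper's $b\geq 1$ expression, and your case analysis on the sign of $2\alpha(1+b)$ and of the product is sound via Lemma \ref{lem1}). The trade-off: your route is marginally cleaner and more robust for this lemma in isolation, but the paper's determination of the ordering \eqref{eq:eigen_order} is not wasted work --- it identifies which $\gamma_i$ equals which $\lambda_j$, and that identification is reused in Lemma \ref{lemma:L2-L(2+3)} and throughout Sections \ref{sec5} and \ref{sec6}, so a proof of this lemma that avoids establishing the ordering would still need to establish it elsewhere.
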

\begin{proof}
    Recall the eigenvalues $\gamma_1,\gamma_2,\gamma_3$ from \eqref{eq:eigenvalues}. We check directly that 
    \begin{align}
    \gamma_3-\gamma_1&=(\beta+b-\alpha)(b+1),\notag\\
    \gamma_3-\gamma_2&=\beta+b^2-\alpha(b+1),\label{eq:gammas}\\
    \gamma_2-\gamma_1&=b(\beta+1).\notag
    \end{align}
Recall that $b$ is an integer not equal to $0$ or $-1$.
Applying $\beta\geq 1+(1+b)\alpha\geq 1$ from Lemma \ref{lem1}, we deduce that 
\begin{equation}\label{eq:eigen_order}
\begin{cases}
      0<\gamma_1<\gamma_2<\gamma_3, &\text{if $b\geq 1$;}\\
      0<\gamma_2<\gamma_3<\gamma_1, &\text{if $b\leq -2$.}
\end{cases}
\end{equation}
If $b\geq 1$, we know $\beta\geq 1$ and $\alpha\geq 0$ from Lemma \ref{lem1}. Hence, we derive that
\begin{align*}
   \lambda_1+\lambda_2-\lambda_3=\gamma_1+\gamma_2-\gamma_3=2\alpha(1+b)+(b^2-1)(\beta-1)\geq 0.
\end{align*}
If $b\leq -2$, we again have $\beta\geq 1$ and hence
\begin{align*}
    \lambda_1+\lambda_2-\lambda_3=\gamma_3+\gamma_2-\gamma_1=(b+1)^2(\beta+1)\geq 0.
\end{align*}
This completes the proof. 
\end{proof}

\begin{lem}\label{lemma:L2-L(2+3)}
     Let $G$ be a distance-regular graph with classical parameters $(3,b,\alpha,\beta)$ and Laplacian eigenvalues $0<\lambda_1<\lambda_2<\lambda_3$. Then for any two adjacent vertices $u$ and $v$ we have 
    $$L^2(u,v)-L(u,v)(\lambda_2+\lambda_3)\geq0.$$
\end{lem}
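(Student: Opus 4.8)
The plan is to reduce the claim to an explicit polynomial inequality in the parameters $b,\alpha,\beta$ and then dispatch the two sign regimes of $b$ separately. First, since $u$ and $v$ are adjacent, the off-diagonal Laplacian entry is $L(u,v)=-A(u,v)=-1$, while Proposition \ref{prop:L^2} gives $L^2(u,v)=-2d+a_1$. Hence the quantity to be controlled becomes
\[
L^2(u,v)-L(u,v)(\lambda_2+\lambda_3)=-2d+a_1+\lambda_2+\lambda_3 ,
\]
so everything is now expressed through the intersection numbers in \eqref{ia} and two of the Laplacian eigenvalues.

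Next I would use the eigenvalue ordering \eqref{eq:eigen_order} to identify $\lambda_2+\lambda_3$ concretely. When $b\geq 1$ the order is $\gamma_1<\gamma_2<\gamma_3$, so $\lambda_2+\lambda_3=\gamma_2+\gamma_3$; when $b\leq -2$ the order is $\gamma_2<\gamma_3<\gamma_1$, so $\lambda_1=\gamma_2$ and therefore $\lambda_2+\lambda_3=\gamma_1+\gamma_3$. Plugging in the explicit formulas for $d$ and $a_1$ from \eqref{ia} and for $\gamma_1,\gamma_2,\gamma_3$ from \eqref{eq:eigenvalues} and collecting terms, I expect the case $b\geq 1$ to collapse to
\[
-2d+a_1+\gamma_2+\gamma_3=(1+b)^2(1+\alpha),
\]
which is nonnegative because $\alpha\geq 0$ by Lemma \ref{lem1}(i). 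This is the easy regime.

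The harder regime is $b\leq -2$, where the analogous computation yields
\[
-2d+a_1+\gamma_1+\gamma_3=1+b+b^2+\alpha(1+b)^2-b\beta,
\]
and the sign is no longer manifest because a stray $-b\beta$ term survives. Here I would invoke the constraint $\beta\geq 1+(1+b)\alpha$ from Lemma \ref{lem1}(iii): since $-b>0$, multiplying through gives $-b\beta\geq -b\bigl(1+(1+b)\alpha\bigr)$, and substituting this lower bound eliminates the $\beta$-dependence, leaving
\[
1+b+b^2+\alpha(1+b)^2-b\bigl(1+(1+b)\alpha\bigr)=1+b^2+\alpha(1+b).
\]
Finally, since $\alpha<-1$ by Lemma \ref{lem1}(ii) and $1+b\leq -1$, the product $\alpha(1+b)$ is positive, so this expression exceeds $1+b^2>0$. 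The main obstacle is therefore not the algebra per se but recognizing that the residual $\beta$-dependent term in the $b\leq -2$ case cannot be signed directly and must be absorbed using the feasibility bound $\beta\geq 1+(1+b)\alpha$, after which the sign facts $\alpha<-1$ and $1+b<0$ finish the proof.
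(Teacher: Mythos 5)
Your proposal is correct and follows essentially the same route as the paper's proof: reduce to $-2d+a_1+\lambda_2+\lambda_3$, identify $\lambda_2+\lambda_3$ as $\gamma_2+\gamma_3$ or $\gamma_1+\gamma_3$ according to the sign of $b$ via \eqref{eq:eigen_order}, and simplify to $(1+b)^2(1+\alpha)$ resp.\ $1+b+b^2+\alpha(1+b)^2-b\beta$, signing the latter with $\beta\geq 1+(1+b)\alpha$ and $\alpha<-1$. The only (immaterial) difference is the last step for $b\le -2$, where you observe $\alpha(1+b)>0$ directly while the paper bounds $(1+b)\alpha>-(1+b)$ to reach $b^2-b>0$.
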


\begin{proof}
     If $b\geq 1$, then we derive by \eqref{ia}, \eqref{eq:eigenvalues}, \eqref{eq:eigen_order} and  Lemma \ref{lem1} that
    \begin{align*}
        &L^2(u,v)-L(u,v)(\lambda_2+\lambda_3)
        =-2d+a_1+(\gamma_2+\gamma_3)\\
        =&-2(1+b+b^2)\beta+(-1+b\alpha+b^2\alpha+\beta)+(1+b)(1+\alpha+b\beta)+(1+b+b^2)(1+\beta)\\
        =&(1+b)^2(1+\alpha)>0.
    \end{align*}
 If $b\leq-2$, we deduce similarly that
     \begin{align*}
     &L^2(u,v)-L(u,v)(\lambda_2+\lambda_3)=-2d+a_1+(\gamma_3+\gamma_1)\\
        =&-2(1+b+b^2)\beta+(-1+b\alpha+b^2\alpha+\beta)+(1+b+b^2)(1+\beta)+(1+\alpha+b\alpha+b^2\beta)\\
        =&-b\beta+(1+b)^2\alpha+1+b+b^2\\
        > &-b(1+(1+b)\alpha)+(1+b)^2\alpha+1+b+b^2\\
        =&(1+b)\alpha+1+b^2\\
        >&-(1+b)+1+b^2\\
        =&b^2-b>0.
    \end{align*}
    This completes the proof.
\end{proof}
In the following two subsections, we present the proof of Theorem \ref{thm1}, in case of $b\ge1$ and $b\le-2$, respectively. 

\subsection{Proof of Theorem \ref{thm1} in case \texorpdfstring{$b\geq 1$}{b>=1}}\label{sec5}
 Let $G$ be a distance-regular graph with classical parameters $(3,b,\alpha,\beta)$ and Laplacian eigenvalues 
    $0<\lambda_1<\lambda_2<\lambda_3$. We assume $b\geq 1$ in this subsection. 
By \eqref{eq:eigen_order}, we have 
  \begin{align}\label{lam}
  \lambda_1=1+\alpha+b\alpha+b^2\beta, \lambda_2=(1+b)(1+\alpha+b\beta),\,\text{and}\,\,\lambda_3=(1+b+b^2)(1+\beta).
  \end{align}

\begin{lem} \label{+di}
Let $G$ be a distance-regular graph with classical parameters $(3,b,\alpha,\beta)$, where $b\ge1$. Then we have for any two distinct vertices $u$ and $v$ that
$$\Delta_1(u,v)\ge 0,\Delta_2(u,v)\ge 0, and\  \Delta_3(u,v)\ge 0.$$
\end{lem}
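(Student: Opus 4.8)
The plan is to sidestep the explicit formulas \eqref{D1}--\eqref{D3} entirely and argue structurally. Recall that by definition $\Delta_i(u,v)=P_{\lambda_i}(u,u)-P_{\lambda_i}(u,v)$, where $P_{\lambda_i}$ is the orthogonal projection onto the $\lambda_i$-eigenspace of $L$. An orthogonal projection is symmetric and idempotent, so its eigenvalues are $0$ and $1$ and $P_{\lambda_i}$ is positive semidefinite. The key point is then just Cauchy--Schwarz for positive semidefinite matrices: writing $P_{\lambda_i}=B^{\top}B$ and letting $b_u$ denote the column of $B$ indexed by $u$, we have $P_{\lambda_i}(u,v)=\langle b_u,b_v\rangle$ and $P_{\lambda_i}(u,u)=\lVert b_u\rVert^{2}$, whence $P_{\lambda_i}(u,v)\le \lVert b_u\rVert\,\lVert b_v\rVert=\sqrt{P_{\lambda_i}(u,u)\,P_{\lambda_i}(v,v)}$.

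To turn this into the desired bound $P_{\lambda_i}(u,v)\le P_{\lambda_i}(u,u)$, I would use that the diagonal of $P_{\lambda_i}$ is constant. This is exactly where distance-regularity enters: a distance-regular graph is walk-regular, so by definition every power of $A$ (equivalently of $L$) has constant diagonal; since each projection $P_{\lambda_i}$ is a polynomial in $L$, its diagonal is constant too, equal to $m_i/n$ with $m_i$ the multiplicity of $\lambda_i$. (This is the same heat-homogeneity of walk-regular graphs recalled in the introduction.) With $P_{\lambda_i}(u,u)=P_{\lambda_i}(v,v)$, the Cauchy--Schwarz bound collapses to $P_{\lambda_i}(u,v)\le P_{\lambda_i}(u,u)$, i.e.\ $\Delta_i(u,v)\ge 0$, simultaneously for $i=1,2,3$.

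I expect no serious obstacle here: the argument uses neither the hypothesis $b\ge 1$ nor the classical-parameter structure nor diameter $3$, and in fact establishes $\Delta_i(u,v)\ge 0$ for every walk-regular graph. The only point needing care is the justification of the constant diagonal, which I would phrase as a one-line consequence of walk-regularity. As a fallback that stays inside the paper's computational framework, one could instead substitute Proposition \ref{prop:L^2} (giving $L^{2}(u,v)\in\{-2d+a_1,\,c_2,\,0\}$ according to $d(u,v)\in\{1,2,3\}$) together with $L(u,v)=-A(u,v)$ into the brackets of \eqref{D1}--\eqref{D3}, and verify the resulting sign in each of the three distance cases using the intersection numbers \eqref{ia}, the eigenvalue formulas \eqref{lam}, and the constraints $\alpha\ge 0$ and $\beta\ge 1+(1+b)\alpha\ge 1$ from Lemma \ref{lem1}. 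In that route the main thing to keep straight is the bookkeeping of the signs $C_1>0$, $C_2<0$, $C_3>0$, so that $\Delta_2(u,v)\ge 0$ corresponds to its bracket being \emph{non-positive} while $\Delta_1,\Delta_3$ correspond to non-negative brackets.
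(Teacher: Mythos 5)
Your main argument is correct, and it takes a genuinely different and more general route than the paper. The paper proves the lemma computationally: it substitutes Proposition \ref{prop:L^2} into \eqref{D1}--\eqref{D3} to get the case formulas \eqref{eq:drgdij}, and then verifies the sign of each bracket in each of the three distance cases using the intersection numbers \eqref{ia}, the eigenvalue expressions \eqref{eq:adj_eig}, and the constraints of Lemma \ref{lem1} (exactly your ``fallback'' route, including the sign bookkeeping $C_1>0$, $C_2<0$, $C_3>0$). Your primary argument instead observes that $\Delta_i(u,v)=P_{\lambda_i}(u,u)-P_{\lambda_i}(u,v)$, that an orthogonal projection is positive semidefinite so $|P_{\lambda_i}(u,v)|\le\sqrt{P_{\lambda_i}(u,u)P_{\lambda_i}(v,v)}$, and that walk-regularity (which distance-regular graphs, and indeed all connected regular graphs with at most four distinct eigenvalues, satisfy) forces the diagonal of each $P_{\lambda_i}$ to be constant, since $P_{\lambda_i}$ is a polynomial in $L$; this yields $P_{\lambda_i}(u,v)\le P_{\lambda_i}(u,u)$ at once. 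This is sound, shorter, and strictly more general: it does not use $b\ge 1$, so it would also dispose of Lemma \ref{-di} and of the corresponding $\Delta_i$ checks in the antipodal case of Section \ref{sec7}, and it shows that the hypothesis $\Delta_i(u,v)\ge 0$ in Theorem \ref{condition} is automatic for the graphs considered there. What the paper's computation buys in exchange is the explicit expressions \eqref{eq:(1+theta2)(1+theta3)}--\eqref{eq:theta1theta2}, which are recycled in the proof of Lemma \ref{-di}; your approach forgoes these but does not need them.
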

\begin{proof}
    Inserting the expressions of $L^2(u,v)$ in Proposition \ref{prop:L^2} into \eqref{D1}-\eqref{D3} yields for any $i\in \{1,2,3\}$ that
    \begin{equation}\label{eq:drgdij}
        \Delta_i(u,v)=\begin{cases}
      C_i\left\{(\lambda_j-d-1)(\lambda_k-d-1)+d-a_1-1\right\}, &\text{if $d(u,v)=1$;}\\
        C_i\left\{(\lambda_j-d)(\lambda_k-d)+d-c_2\right\}, &\text{if $d(u,v)=2$;}\\
        C_i\left\{(\lambda_j-d)(\lambda_k-d)+d\right\}, &\text{if $d(u,v)=3$,}
\end{cases}
    \end{equation}
    where $\{j,k\}=\{1,2,3\}\setminus\{i\}$. 
    
    Recalling that $C_1>0$, $C_2<0$, $C_3>0$, and $c_2> 0$, it remains to show
    \begin{itemize}
   \item [(i)] $(\lambda_2-d-1)(\lambda_3-d-1)+d-a_1-1\geq 0$, and $(\lambda_2-d)(\lambda_3-d)+d-c_2\geq 0$.
    \item [(ii)] $(\lambda_1-d-1)(\lambda_3-d-1)+d-a_1-1\leq 0$, and $(\lambda_1-d)(\lambda_3-d)+d\leq 0$.
    \item [(iii)] $(\lambda_1-d-1)(\lambda_2-d-1)+d-a_1-1\geq 0$, and $(\lambda_1-d)(\lambda_2-d)+d-c_2\geq 0$.
    \end{itemize}
Notice that $d-\lambda_i=d-\gamma_i=\theta_i$ whose expressions are provided in \eqref{eq:adj_eig}. Next, we check the above inequalities case by case. 

\textbf{Case $(i)$}: Inserting the expression $d-a_1-1=b_1=(b+b^2)(\beta-\alpha)$ from \eqref{ia}, we deduce
\begin{align}
    &(\lambda_2-d-1)(\lambda_3-d-1)+d-a_1-1=(1+\theta_2)(1+\theta_3)+b_1\notag\\
    =&\left\{1+\beta-(1+b)(1+\alpha)\right\}(-b-b^2)+(b+b^2)(\beta-\alpha)\notag\\
    =&b^2(1+b)(1+\alpha)> 0,\label{eq:(1+theta2)(1+theta3)}
\end{align}
where the last inequality follows from the fact that $\alpha\geq 0$ due to Lemma \ref{lem1}. Furthermore, we derive by noticing $d=(1+b+b^2)\beta$ and $c_2=(1+b)(1+\alpha)$ that
\begin{align}
    &(\lambda_2-d)(\lambda_3-d)+d-c_2=\theta_2\theta_3+d-c_2\notag\\
    =&-\left\{\beta-(1+b)(1+\alpha)\right\}(1+b+b^2)+(1+b+b^2)\beta-c_2\notag\\
    =&c_2(b+b^2)> 0.\label{eq:theta2theta3}
\end{align}

\textbf{Case $(ii)$}: Similarly as in Case $(i)$, we derive
\begin{align}
    &(\lambda_1-d-1)(\lambda_3-d-1)+d-a_1-1=(1+\theta_1)(1+\theta_3)+b_1\notag\\
    =&(1+b)(\beta-\alpha)(-b-b^2)+(b+b^2)(\beta-\alpha)\notag\\
    =&-(b+b^2)b(\beta-\alpha)<0,\label{eq:(1+theta1)(1+theta3)}
\end{align}
and
\begin{align}
    &(\lambda_1-d)(\lambda_3-d)+d=\theta_1\theta_3+d\notag\\
    =&-\left\{(1+b)(\beta-\alpha)-1\right\}(1+b+b^2)+(1+b+b^2)\beta\notag\\
    =&(1+b+b^2)(1+(1+b)\alpha-b\beta)\leq 0,\label{eq:theta1theta3}
\end{align}
where we have used $\beta\geq 1+(1+b)\alpha$.

\textbf{Case $(iii)$}: Similarly as in Case $(i)$, we deduce
\begin{align}
    &(\lambda_1-d-1)(\lambda_2-d-1)+d-a_1-1=(1+\theta_1)(1+\theta_2)+b_1\notag\\
    =&(1+b)(\beta-\alpha)\left\{1+\beta-(1+b)(1+\alpha)\right\}+(b+b^2)(\beta-\alpha)\notag\\
    =&(1+b)(\beta-\alpha)(\beta-b\alpha-\alpha)\geq0,\label{eq:(1+theta1)(1+theta2)}
\end{align}
and
\begin{align}
    &(\lambda_1-d)(\lambda_2-d)+d-c_2=\theta_1\theta_2+d-c_2\notag\\
    =&\left\{(1+b)(\beta-\alpha)-1\right\}\left\{\beta-(1+b)(1+\alpha)\right\}+(1+b+b^2)\beta-c_2\notag\\
   =&(1+b)(\beta-\alpha-1)(\beta-b\alpha-\alpha)\geq0,\label{eq:theta1theta2}
\end{align}
where we have used $\beta\geq 1+(1+b)\alpha$ and $\alpha\geq 0$. This completes the proof.
\end{proof}

\begin{lem}\label{+dij}
Let $G$ be a distance-regular graph with classical parameters  $(3,b,\alpha,\beta)$, where $b\ge1$. For any two distinct vertices $u$ and $v$, we have the following estimates.
\begin{itemize}
    \item [(i)] If $d(u,v)=1$, then $\Delta_{12}(u,v)\ge0$.
\item [(ii)]  If $d(u,v)=2$ and $\beta\ge1+(2 + b) \alpha$, then $\Delta_{12}(u,v)\ge0$.
\item[(iii)] If $d(u,v)=2$ and $\beta<1+(2 + b) \alpha$, then $\Delta_{13}(u,v)\le0$.
    \item [(iv)] If $d(u,v)=3$, then $\Delta_{23}(u,v)\ge0$.
\end{itemize}
\end{lem}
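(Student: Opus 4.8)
The plan is to compute each relevant $\Delta_{ij}(u,v)$ explicitly as a rational function of $b,\alpha,\beta$, by inserting the values of $L(u,v)$ and $L^2(u,v)$ from Proposition \ref{prop:L^2} into the formula \eqref{Dij}, and then to determine its sign using the constraints of Lemma \ref{lem1}. First I would record that for $d(u,v)=1$ we have $L(u,v)=-1$ and $L^2(u,v)=-2d+a_1$; for $d(u,v)=2$ we have $L(u,v)=0$ and $L^2(u,v)=c_2$; and for $d(u,v)=3$ we have $L(u,v)=0$ and $L^2(u,v)=0$. It will be convenient to note from \eqref{Dij} that the sign of $\Delta_{ij}(u,v)$ is governed by $C_iC_j(\lambda_j-\lambda_i)$ times a bracketed quadratic in $L(u,v)$; since $\lambda_j>\lambda_i$, the prefactor $C_iC_j(\lambda_j-\lambda_i)$ has a sign determined solely by $C_iC_j$, and in the $b\ge 1$ regime $C_1>0$, $C_2<0$, $C_3>0$, so $C_1C_2<0$, $C_1C_3>0$, and $C_2C_3<0$. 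Keeping track of this sign is what flips $\Delta_{12}\ge0$ versus $\Delta_{13}\le0$ in the statement.

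For parts (i), (ii), and (iv) I would substitute the appropriate distance data together with the explicit eigenvalues \eqref{lam} and the intersection numbers \eqref{ia}, simplify the resulting expression into a product of linear factors in $b,\alpha,\beta$ (each factor being, up to sign, one of the quantities $1+\alpha$, $\beta-\alpha$, $\beta-(1+b)\alpha-1$, $\gamma_j-\gamma_i$, etc.), and then invoke Lemma \ref{lem1}, namely $\alpha\ge 0$ and $\beta\ge 1+(1+b)\alpha\ge 1$, to conclude non-negativity. For instance, in part (i) I expect $\Delta_{12}(u,v)$ to factor so that the denominator's sign from $C_1C_2<0$ cancels against a negative numerator factor coming from $d(u,v)=1$; in part (iv) the sign management for $\Delta_{23}$ uses $C_2C_3<0$. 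Part (iii) is the hinge: when $d(u,v)=2$ and $\beta<1+(2+b)\alpha$, I would compute $\Delta_{13}(u,v)$, whose prefactor sign is $C_1C_3>0$, and show the bracketed quadratic in $L(u,v)$ evaluated at $L(u,v)=0$ yields a value whose sign makes $\Delta_{13}\le 0$ precisely under this extra inequality. The threshold $\beta=1+(2+b)\alpha$ is exactly the boundary appearing in the earlier computation \eqref{dc2}, so I would expect the same quantity $-\alpha-b(1+2\alpha)-b^2\alpha+(1+b+b^2)\beta$ to reappear and control the sign.

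The main obstacle will be the algebraic bookkeeping in the $d(u,v)=2$ cases, where $L(u,v)=0$ kills the terms linear and quadratic in $L(u,v)$ in \eqref{Dij} and leaves only the constant $-\frac1n\bigl((d^2+d)\lambda_k-d\lambda_k^2\bigr)$, so the sign reduces to comparing $\lambda_k$ against $d+1$, i.e. to the sign of $\theta_k+1=\frac{b_k}{b^k}-\gauss{k}{1}+1$; disentangling which of the two sub-cases (ii)/(iii) corresponds to which factorization, and verifying that the single scalar condition $\beta\gtrless 1+(2+b)\alpha$ cleanly separates the sign of the relevant constant, is where care is needed. I would organize the proof by first reducing every $\Delta_{ij}$ to a product of the standard linear factors, then applying Lemma \ref{lem1} uniformly; the dichotomy in (ii) and (iii) is designed so that whenever $\Delta_{12}$ fails to be non-negative, $\Delta_{13}$ becomes non-positive instead, matching exactly the two alternative hypotheses of Theorem \ref{condition}, so the final step is to check this dichotomy is exhaustive for $d(u,v)=2$.
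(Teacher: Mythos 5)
Your overall strategy --- substitute the distance data from Proposition \ref{prop:L^2} and the parameters $(b,\alpha,\beta)$ into \eqref{Dij}, factor, and invoke Lemma \ref{lem1} --- is viable and is essentially what the paper does (the paper first rewrites \eqref{Dij} as $\frac{n\Delta_{ij}}{C_iC_j(\lambda_j-\lambda_i)}=(d+(n-1)L(u,v))\lambda_k^2-((n-1)L^2(u,v)+d^2+d)\lambda_k+nd(L^2(u,v)-(d+1)L(u,v))$ and then specializes). Your sign bookkeeping for the prefactors $C_iC_j(\lambda_j-\lambda_i)$ is also correct.

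However, there is a concrete error that breaks your treatment of parts (ii) and (iii), which are the heart of the lemma. You correctly record at the outset that $L^2(u,v)=c_2$ when $d(u,v)=2$, but then assert that ``$L(u,v)=0$ kills the terms linear and quadratic in $L(u,v)$ in \eqref{Dij} and leaves only the constant $-\frac1n((d^2+d)\lambda_k-d\lambda_k^2)$, so the sign reduces to comparing $\lambda_k$ against $d+1$.'' This treats $L^2(u,v)$ as the square of the scalar $L(u,v)$, whereas it is the $(u,v)$-entry of the matrix $L^2$, which equals $c_2=(1+b)(1+\alpha)>0$ at distance $2$. The correct reduction is
\begin{equation*}
\frac{n\Delta_{ij}(u,v)}{C_iC_j(\lambda_j-\lambda_i)}=d\lambda_k(\lambda_k-d-1)-\bigl((n-1)\lambda_k-nd\bigr)c_2 ,
\end{equation*}
and the entire difficulty of (ii)/(iii) lies in the competition between these two terms: one must substitute the vertex count $n$ from \eqref{n} (including the $b_2/c_3$ contribution), after which the bracket for $\Delta_{12}$ becomes a multiple of $-(\beta-1-(2+b)\alpha)$, which is how the threshold $\beta=1+(2+b)\alpha$ actually arises --- not from the sign of $\theta_k+1$. (Your ``only the constant survives'' reduction is valid only for $d(u,v)=3$, where it does give part (iv) immediately.) Moreover, in sub-case (iii) the resulting expression is not a single linear factor in $\beta$: one is left with a quadratic $g(\beta)$ whose negativity on the interval $1+(1+b)\alpha\le\beta<1+(2+b)\alpha$ must be verified, e.g.\ by convexity and checking both endpoints. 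As written, your plan would not produce a correct proof of (ii) or (iii); the dichotomy being exhaustive is the easy part, while identifying where the threshold comes from and controlling the $n$-dependent term is the missing work.
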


\begin{proof}
For $\{i,j,k\}=\{1,2,3\}$, we obtain by rearranging \eqref{Dij} that
\begin{align*}
\frac{n\Delta_{ij}(u,v)}{C_iC_j(\lambda_j-\lambda_i)}=(d+(n-1)L(u,v))\lambda_k^2-&\left((n-1)L^2(u,v)+d^2+d\right)\lambda_k\\
+&nd\left(L^2(u,v)-(d+1)L(u,v)\right).
\end{align*}
Applying Proposition \ref{prop:L^2}, we further deduce that
\begin{align*}
\frac{n\Delta_{ij}(u,v)}{C_iC_j(\lambda_j-\lambda_i)}=
\begin{cases}
d\lambda_k(\lambda_k-d-1), &\text{if $d(u,v)=3$;}\\
d\lambda_k(\lambda_k-d-1)-\left((n-1)\lambda_k-nd\right)c_2, &\text{if $d(u,v)=2$;}\\
-(n-1-d)\lambda_k^2+\left((n-1)(2d-a_1)-d(d+1)\right)\lambda_k &\\
\hspace{5.5cm}-nd(d-a_1-1), &\text{if $d(u,v)=1$.}
\end{cases}
\end{align*}
Inserting $d-a_1-1=b_1$, we have in the case $d(u,v)=1$ that
\begin{align}
    \frac{n\Delta_{ij}(u,v)}{C_iC_j(\lambda_j-\lambda_i)}=&-(n-1-d)\lambda_k^2+\left((n-1-d)(d+1)+(n-1)b_1\right)\lambda_k -ndb_1\notag\\
       =&-(n-1-d)\lambda_k(\lambda_k-d-1)+\left((n-1)\lambda_k-nd\right)b_1.\label{eq:Deltaij_simple}
\end{align}

\textbf{Case $(i)$: $d(u,v)=1$.} 
Recall that $\lambda_3=\gamma_3=(1+b+b^2)(1+\beta)=d+1+b+b^2$. We deduce
\begin{align*}
       \frac{n\Delta_{12}(u,v)}{C_1C_2(\lambda_2-\lambda_1)}
       =&-(n-1-d)(d+1+b+b^2)(b+b^2)+\left((n-1)(1+b+b^2)-d\right)b_1\\
       =&(1+b+b^2)\left\{-(n-1-d)(1+\beta)b(1+b)+(n-1-\beta)b_1\right\}
\end{align*}
Using $b_1=(b+b^2)(\beta-\alpha)$, we arrive at
\begin{align}
       \frac{n\Delta_{12}(u,v)}{C_1C_2(\lambda_2-\lambda_1)}
       =&(1+b+b^2)b(1+b)\left\{-(n-1-d)(1+\beta)+(n-1-\beta)(\beta-\alpha)\right\}\notag\\
       =&(1+b+b^2)b(1+b)\left\{-(n-1)(1+\alpha)+d(1+\beta)-\beta(\beta-\alpha)\right\}.\label{eq:deltaCase1}
\end{align}
By \eqref{n}, we have 
\begin{equation}\label{eq:nalpha}
    (n-1)(1+\alpha)=d(1+\alpha)+db(\beta-\alpha)+db(\beta-\alpha)\frac{b_2}{c_3}.
\end{equation}
Therefore, we deduce
\begin{align*}
    &-(n-1)(1+\alpha)+d(1+\beta)-\beta(\beta-\alpha)\\
    < &-d(1+\alpha)-db(\beta-\alpha)+d(1+\beta)-\beta(\beta-\alpha)\\
    =&(\beta-\alpha)(-db+b-\beta)< 0.
\end{align*}
Inserting into \eqref{eq:deltaCase1}, we obtain 
\[\frac{n\Delta_{12}(u,v)}{C_1C_2(\lambda_2-\lambda_1)}< 0,\]
which implies that $\Delta_{12}(u,v)> 0$.

\textbf{Case $(ii)$: $d(u,v)=2$ and $\beta\geq 1+(2+b)\alpha$.}
We have in this case that 
\begin{align}
    \frac{n\Delta_{12}(u,v)}{C_1C_2(\lambda_2-\lambda_1)}
    =&d\lambda_3(\lambda_3-d-1)-\left((n-1)\lambda_3-nd\right)c_2=:B_1+B_2.\label{eq:deltaCase2}
\end{align}
Recall that $\lambda_3=(1+b+b^2)(1+\beta)$, $d=(1+b+b^2)\beta$ and $c_2=(1+b)(1+\alpha)$. We deduce that
\begin{align*}
    B_1:=d\lambda_3(\lambda_3-d-1)=(1+b+b^2)db(1+b)(1+\beta),
\end{align*}
and
\begin{align*}
    B_2:=-\left((n-1)\lambda_3-nd\right)c_2=-(1+b+b^2)(1+b)(1+\alpha)(n-1-\beta).
\end{align*}
Applying \eqref{n}, we further calculate that
\begin{align*}
    &\frac{B_1+B_2}{(1+b)(1+b+b^2)}=-(n-1-\beta)(1+\alpha)+db(1+\beta)\\
    =&-(d-\beta)(1+\alpha)-db(\beta-\alpha)-db(\beta-\alpha)\frac{b_2}{c_3}+db(1+\beta)\\
    =&(db-d+\beta)(1+\alpha)-db(\beta-\alpha)\frac{b_2}{c_3}\\
    =&b\beta\left\{b^2-(1+b+b^2)(\beta-\alpha)\frac{b_2}{c_3}\right\}.
\end{align*}
Inserting the expressions $b_2=b^2(\beta-\alpha-b\alpha)$ and $c_3=(1+b+b^2)(1+\alpha+b\alpha)$, we obtain
\begin{align*}
    \frac{(B_1+B_2)(1+\alpha+b\alpha)}{b(1+b)(1+b+b^2)\beta}=&b^2(1+\alpha+b\alpha)-(\beta-\alpha)b^2(\beta-\alpha-b\alpha)\\
    =&-b^2(1+\beta)(\beta-1-(2+b)\alpha).
\end{align*}
By assumption, we have $\beta-1-(2+b)\alpha\geq 0$, and hence $B_1+B_2\leq 0$. Hence, we derive $\Delta_{12}(u,v)\geq 0$ by \eqref{eq:deltaCase2}.

\textbf{Case $(iii)$: $d(u,v)=2$ and $\beta< 1+(2+b)\alpha$.} We have in this case that 
\begin{align*}
    \frac{n\Delta_{13}(u,v)}{C_1C_3(\lambda_3-\lambda_1)}
    =&d\lambda_2(\lambda_2-d-1)-\left((n-1)\lambda_2-nd\right)c_2\notag\\
    =&\left(d\lambda_2-(n-1)c_2\right)(\lambda_2-d-1)-(n-1-d)c_2\notag\\
  =&(1+b)\left\{d(1+\alpha+b\beta)-(n-1)(1+\alpha)\right\}(b-\beta+b\alpha+\alpha)\notag\\
  &\hspace{5cm}-(n-1-d)(1+b)(1+\alpha),  
\end{align*}
where we have used $\lambda_2=(1+b)(1+\alpha+b\beta)$, $d=(1+b+b^2)\beta$ and $c_2=(1+b)(1+\alpha)$. Applying \eqref{n}, we derive
\begin{align*}
    &\frac{n\Delta_{13}(u,v)}{C_1C_3(\lambda_3-\lambda_1)(1+b)}\\
    =&\left(db\alpha-db(\beta-\alpha)\frac{b_2}{c_3}\right)(b-\beta+b\alpha+\alpha)-\left(db(\beta-\alpha)+db(\beta-\alpha)\frac{b_2}{c_3}\right)\\
    =&db\left\{\alpha(b-\beta+b\alpha+\alpha)-(\beta-\alpha)\right\}-db(\beta-\alpha)\frac{b_2}{c_3}(b-\beta+b\alpha+\alpha+1)\\
    =&-db(1+\alpha)(\beta-\alpha-b\alpha) -\beta b^3(\beta-\alpha)\frac{\beta-\alpha-b\alpha}{1+\alpha+b\alpha}(b-\beta+b\alpha+\alpha+1),
\end{align*}
where we have inserted $b_2=b^2(\beta-\alpha-b\alpha)$, $c_3=(1+b+b^2)(1+\alpha+b\alpha)$ and $d=(1+b+b^2)\beta$. Rearranging the identity yields 
\begin{align}\label{eq:deltaCase3}
    &\frac{n(1+\alpha+b\alpha)\Delta_{13}(u,v)}{C_1C_3(\lambda_3-\lambda_1)\beta b(1+b)(\beta-\alpha-b\alpha)}\notag\\
    =&-(1+b+b^2)(1+\alpha)(1+\alpha+b\alpha)+b^2(\beta-\alpha)(\beta-b-b\alpha-\alpha-1)=:g(\beta).
\end{align}
For the function $g(\beta)$, we observe that
\begin{align*}
    \max_{1+(1+b)\alpha\leq \beta<1+(2+b)\alpha}g(\beta)\leq \max\{g(1+(1+b)\alpha), g(1+(2+b)\alpha)\}.
\end{align*}
Since $-(1+b+b^2)(1+\alpha)(1+\alpha+b\alpha)<0$, and the value $1+(1+b)\alpha$ lies in between $\alpha$ and $b+b\alpha+\alpha+1$, we have $g(1+(1+b)\alpha)<0$. Moreover, we check
\begin{align*}
    g(1+(2+b)\alpha)=(1+\alpha+b\alpha)(-b^3-(1+b+b^2)-(1+b)(1+\alpha))<0.
\end{align*}
Here, we used $\alpha\geq 0$ from Lemma \ref{lem1}.
Therefore, we have $\max_{1+(1+b)\alpha\leq \beta<1+(2+b)\alpha}g(\beta)<0$, and hence $\Delta_{13}(u,v)>0$ by \eqref{eq:deltaCase3}.

\textbf{Case $(iv)$: $d(u,v)=3$.}
Recall that $d-\lambda_1=\theta_1=(1+b)(\beta-\alpha)-1$. We derive that
\begin{align*}
\frac{n\Delta_{23}(u,v)}{C_2C_3(\lambda_3-\lambda_2)}=d\lambda_1(\lambda_1-d-1)=-d\lambda_1(1+b)(\beta-\alpha)< 0.
\end{align*}
Indeed, we have for any graph with $n$ vertices and maximal degree $d_{\max}$, we have $\lambda_1\leq \frac{n}{n-1}d_{\max}<d_{\max}$ by a trace argument. Hence we have $\lambda_1-d<0$ for any $d$-regular graph. Therefore, we obtain $\Delta_{23}(u,v)>0$.
This completes the proof.
\end{proof}

\begin{proof}[Proof of Theorem \ref{thm1} in case $b\geq 1$]
    In case $b\geq 1$, Theorem \ref{thm1} follows from Theorem \ref{condition}, by verifing the conditions via Lemma \ref{lemma:1+2-3}, Lemma \ref{lemma:L2-L(2+3)}, Lemma \ref{+di} and Lemma \ref{+dij}.
\end{proof}

\subsection{Proof of Theorem \ref{thm1} in case \texorpdfstring{$b\le-2$}{b <= -2}}\label{sec6}

In this section , we will prove Theorem \ref{thm1} in the case of $b\le-2$.

\begin{lem}\label{-di}
    Let $G$ be a distance-regular graph with classical parameters $(3,b,\alpha,\beta)$, where $b\le-2$. Then we have for any two distinct vertices $u$ and $v$ that
    $$\Delta_1(u,v)\ge0, \Delta_2(u,v)\ge0, \text{and} \  \Delta_3(u,v)\ge0.$$
\end{lem}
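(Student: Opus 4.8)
The plan is to follow the proof of Lemma \ref{+di} almost verbatim, since the case decomposition \eqref{eq:drgdij} was derived only from Proposition \ref{prop:L^2} and the defining formulas \eqref{D1}--\eqref{D3}, and therefore remains valid when $b\le-2$. The single structural change is the ordering of the Laplacian eigenvalues: by \eqref{eq:eigen_order}, when $b\le-2$ we have $\lambda_1=\gamma_2$, $\lambda_2=\gamma_3$ and $\lambda_3=\gamma_1$, so that $d-\lambda_1=\theta_2$, $d-\lambda_2=\theta_3$ and $d-\lambda_3=\theta_1$, where $\theta_1,\theta_2,\theta_3$ are the adjacency eigenvalues in \eqref{eq:adj_eig}. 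The signs $C_1>0$, $C_2<0$, $C_3>0$ depend only on the order $\lambda_1<\lambda_2<\lambda_3$ and are thus unchanged.

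With this dictionary I would read off from \eqref{eq:drgdij}, for each $i$, the ``brace'' expressions whose signs control $\Delta_i(u,v)$. Since $c_2>0$, the brace for $d(u,v)=3$, namely $\theta_j\theta_k+d$, exceeds the brace for $d(u,v)=2$, namely $\theta_j\theta_k+d-c_2$. Hence for $\Delta_1$ and $\Delta_3$ (where $C_i>0$ demands a nonnegative brace) it suffices to verify the smaller $d(u,v)=2$ brace, whereas for $\Delta_2$ (where $C_2<0$ demands a nonpositive brace) it suffices to verify the larger $d(u,v)=3$ brace. Under the present index assignment the relevant quantities are $(1+\theta_1)(1+\theta_3)+b_1$ and $\theta_1\theta_3+d-c_2$ for $\Delta_1$; $(1+\theta_1)(1+\theta_2)+b_1$ and $\theta_1\theta_2+d$ for $\Delta_2$; and $(1+\theta_2)(1+\theta_3)+b_1$ and $\theta_2\theta_3+d-c_2$ for $\Delta_3$.

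Four of these six quantities were already computed in the proof of Lemma \ref{+di}: the three $d(u,v)=1$ braces are \eqref{eq:(1+theta1)(1+theta3)}, \eqref{eq:(1+theta1)(1+theta2)}, \eqref{eq:(1+theta2)(1+theta3)}, and $\theta_2\theta_3+d-c_2$ is \eqref{eq:theta2theta3}. I would simply re-examine their signs under the constraints from Lemma \ref{lem1} for $b\le-2$, that is $\alpha<-1$, $1+b\le-1$ and $\beta\ge1+(1+b)\alpha\ge1$; these give $\beta-\alpha>0$, $\beta-(1+b)\alpha\ge1$, $b+b^2=b(1+b)>0$ and $c_2>0$. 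One then reads off $(1+\theta_1)(1+\theta_3)+b_1=-(b+b^2)b(\beta-\alpha)\ge0$, $(1+\theta_2)(1+\theta_3)+b_1=b^2(1+b)(1+\alpha)\ge0$, $\theta_2\theta_3+d-c_2=c_2(b+b^2)\ge0$, and $(1+\theta_1)(1+\theta_2)+b_1=(1+b)(\beta-\alpha)(\beta-(1+b)\alpha)\le0$, which settles all $d(u,v)=1$ cases together with the $d(u,v)=2,3$ cases of $\Delta_3$.

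The genuinely new input is the sign of the two remaining braces, which in Lemma \ref{+di} occurred on the opposite side of $c_2$ and so were never evaluated in this exact form. I would obtain them by shifting the identities \eqref{eq:theta1theta3} and \eqref{eq:theta1theta2} by $\mp c_2$ and factoring: one finds $\theta_1\theta_3+d-c_2=b\bigl(b+(1+b)^2\alpha-(1+b+b^2)\beta\bigr)$, where the bracket is at most $-(1+b^2)-b^2(1+b)\alpha<0$ by $\beta\ge1+(1+b)\alpha$, so the product is positive; and $\theta_1\theta_2+d=(1+b)\bigl[(\beta-\alpha-1)(\beta-(1+b)\alpha)+(1+\alpha)\bigr]$, whose inner bracket is at least $(-\alpha)\cdot1+(1+\alpha)=1$ by $\beta-\alpha-1\ge-\alpha$ and $\beta-(1+b)\alpha\ge1$, so with $1+b<0$ the product is negative. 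These two factorizations and their sign estimates are the only real obstacle; the rest is bookkeeping transported from the $b\ge1$ case through the reversed eigenvalue order.
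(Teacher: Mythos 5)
Your proposal is correct and follows essentially the same route as the paper: it reuses the case reduction \eqref{eq:drgdij}, applies the eigenvalue reordering $d-\lambda_1=\theta_2$, $d-\lambda_2=\theta_3$, $d-\lambda_3=\theta_1$ from \eqref{eq:eigen_order}, recycles the factorizations \eqref{eq:(1+theta1)(1+theta3)}--\eqref{eq:theta1theta2} from Lemma \ref{+di}, and your two new computations $\theta_1\theta_3+d-c_2=b\bigl(b+(1+b)^2\alpha-(1+b+b^2)\beta\bigr)$ and $\theta_1\theta_2+d=(1+b)\bigl[(\beta-\alpha-1)(\beta-(1+b)\alpha)+(1+\alpha)\bigr]$ coincide (up to trivial regrouping) with the ones in the paper's proof, with the same sign analysis under $\alpha<-1$, $b+b^2>0$, $\beta\ge 1+(1+b)\alpha\ge 1$.
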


\begin{proof}
The proof is similar to that of Lemma \ref{+di}. 
Recalling that $C_1>0$, $C_2<0$, $C_3>0$, and $c_2\geq 0$ and the identity \eqref{eq:drgdij}, it remains to show
    \begin{itemize}
   \item [(i)] $(\lambda_2-d-1)(\lambda_3-d-1)+d-a_1-1\geq 0$, and $(\lambda_2-d)(\lambda_3-d)+d-c_2\geq 0$.
    \item [(ii)] $(\lambda_1-d-1)(\lambda_3-d-1)+d-a_1-1\leq 0$, and $(\lambda_1-d)(\lambda_3-d)+d\leq 0$.
    \item [(iii)] $(\lambda_1-d-1)(\lambda_2-d-1)+d-a_1-1\geq 0$, and $(\lambda_1-d)(\lambda_2-d)+d-c_2\geq 0$.
    \end{itemize}
By \eqref{eq:eigen_order}, we have in case $b\leq -2$ that
$\lambda_1=\gamma_2, \lambda_2=\gamma_3$ and $\lambda_3=\gamma_1$. This tells in particular that \[d-\lambda_1=\theta_2,\,\, d-\lambda_2=\theta_3, \,\,\text{and}\,\,d-\lambda_3=\theta_1.\] Using the calculations in the proof of Lemma \ref{+di}, we check $(i)-(iii)$ as follows. 

  \textbf{Case $(i)$}: By \eqref{eq:(1+theta1)(1+theta3)}, we have
\begin{align*}
    &(\lambda_2-d-1)(\lambda_3-d-1)+(d-a_1-1)=(1+\theta_3)(1+\theta_1)+b_1\\
    =&-b(b+b^2)(\beta-\alpha)>0.
\end{align*}
In the last inequality, we have used the fact that $\alpha<-1$ due to Lemma \ref{lem1}. Furthermore, we derive by \eqref{eq:theta1theta3} that
\begin{align*}
    &(\lambda_2-d)(\lambda_3-d)+d-c_2=\theta_3\theta_1+d-c_2\\
    =&(1+b+b^2)(1+(1+b)\alpha-b\beta)-(1+b)(1+\alpha)\\
    =&b\left\{b+(1+b)^2\alpha-(1+b+b^2)\beta\right\}>0.
\end{align*}

\textbf{Case $(ii)$}: We derive by \eqref{eq:(1+theta1)(1+theta2)} that
\begin{align*}
    &(\lambda_1-d-1)(\lambda_3-d-1)+d-a_1-1=(1+\theta_2)(1+\theta_1)+b_1\\
    =&(1+b)(\beta-\alpha)(\beta-b\alpha-\alpha)<0.
\end{align*}
Furthermore, we deduce from \eqref{eq:theta1theta2} that
\begin{align*}
    &(\lambda_1-d)(\lambda_3-d)+d=\theta_2\theta_1+d\\
    =&(1+b)(\beta-\alpha-1)(\beta-b\alpha-\alpha)+c_2\\
    =&(1+b)\left\{\beta(\beta-b\alpha-\alpha)-(1+\alpha)(\beta-b\alpha-\alpha-1)\right\}<0.
\end{align*}

\textbf{Case $(iii)$}: We deduce from \eqref{eq:(1+theta2)(1+theta3)} and \eqref{eq:theta2theta3} that
\begin{align*}
    &(\lambda_1-d-1)(\lambda_2-d-1)+d-a_1-1=(1+\theta_2)(1+\theta_3)+b_1\\
    =&b^2(1+b)(1+\alpha)> 0,
\end{align*}
and
\begin{align*}
    &(\lambda_1-d)(\lambda_2-d)+d-c_2=\theta_2\theta_3+d-c_2=c_2(b+b^2)>0.
\end{align*}This completes the proof.
\end{proof}

    \begin{lem}\label{-dij}
Let $G$ be a distance-regular graph with classical parameters  $(3,b,\alpha,\beta)$, where $b\le-2$. For any two distinct vertices $u$ and $v$, we have the following estimates. 
\begin{itemize}
    \item [(i)] If $d(u,v)=1$, then $\Delta_{12}(u,v)\ge0$,
\item [(ii)]  If $d(u,v)=2$, then $\Delta_{12}(u,v)\ge0$,
    \item [(iii)] If $d(u,v)=3$, then $\Delta_{23}(u,v)\ge0$.
\end{itemize}
\end{lem}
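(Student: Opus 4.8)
The plan is to run the same machinery as in the proof of Lemma \ref{+dij}, now under the sign constraints of the regime $b\le -2$. Recall from Lemma \ref{lem1} that here $\alpha<-1$ and $\beta\ge 1+(1+b)\alpha\ge 1$, and from \eqref{eq:eigen_order} that the Laplacian eigenvalues are labelled $\lambda_1=\gamma_2$, $\lambda_2=\gamma_3$, $\lambda_3=\gamma_1$. First I would start from the reduction formulas for $\frac{n\Delta_{ij}(u,v)}{C_iC_j(\lambda_j-\lambda_i)}$ established in the proof of Lemma \ref{+dij} for the three cases $d(u,v)=1,2,3$; these are purely formal and do not use the sign of $b$. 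Since $C_1,C_3>0$ and $C_2<0$, the prefactors satisfy $C_1C_2(\lambda_2-\lambda_1)<0$ and $C_2C_3(\lambda_3-\lambda_2)<0$, so proving $\Delta_{12}(u,v)\ge 0$ (parts (i),(ii)) and $\Delta_{23}(u,v)\ge 0$ (part (iii)) is equivalent to showing that the corresponding normalized quantity is $\le 0$. The relevant ``third eigenvalue'' $\lambda_k$ is $\lambda_3=\gamma_1$ for $\Delta_{12}$ (where $k=3$) and $\lambda_1=\gamma_2$ for $\Delta_{23}$ (where $k=1$).

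I would dispose of part (iii) first, since it needs no vertex count. Here $\frac{n\Delta_{23}(u,v)}{C_2C_3(\lambda_3-\lambda_2)}=d\lambda_1(\lambda_1-d-1)$ with $\lambda_1=\gamma_2=(1+b)(1+\alpha+b\beta)>0$. Using $d-\lambda_1=\theta_2=\beta-(1+b)(1+\alpha)$ one obtains $\lambda_1-d-1=(1+b)(1+\alpha)-\beta-1$, and the bound $\beta\ge 1+(1+b)\alpha$ gives $\lambda_1-d-1\le b-1<0$. Hence $d\lambda_1(\lambda_1-d-1)<0$, and dividing by the negative prefactor yields $\Delta_{23}(u,v)>0$.

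For parts (i) and (ii) I would substitute $\lambda_3=\gamma_1$, use $\lambda_3-d-1=-(1+b)(\beta-\alpha)$ and $b_1=b(1+b)(\beta-\alpha)$ so that $(1+b)(\beta-\alpha)<0$ factors out of the reduction formula (e.g.\ \eqref{eq:Deltaij_simple} in the distance-$1$ case), reducing the sign question to a single brace; then I would eliminate $n$ by inserting \eqref{n}, equivalently the identity \eqref{eq:nalpha}. The distance-$2$ case is the cleaner of the two: after substitution the sign is controlled by $\beta-1-(2+b)\alpha$, which is automatically positive in this regime, since $\beta\ge 1+(1+b)\alpha$ while $1+(1+b)\alpha-\bigl(1+(2+b)\alpha\bigr)=-\alpha>0$ as $\alpha<-1$. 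This is precisely why part (ii) needs no subcase, in contrast with the $b\ge 1$ situation of Lemma \ref{+dij}; the number $c_2=(1+b)(1+\alpha)>0$ enters with the correct sign throughout.

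The main obstacle is the distance-$1$ case of part (i). After factoring out $(1+b)(\beta-\alpha)$ one is left with the task of showing $nQ-\lambda_3\bigl((1+b)+d\bigr)\ge 0$, where $Q=(1+b)\lambda_3-bd=(1+b)+(1+b)^2\alpha-b\beta$; one verifies $Q>0$ (indeed $Q\ge 1+(1+b)\alpha\ge 1$) and $\lambda_3+b=(1+b)(1+\alpha)+b^2\beta>0$, but the inequality is a genuine competition between the large positive contribution $nQ$ and the subtracted positive terms, so it cannot be settled termwise. I would resolve it exactly as in cases (i)--(ii) of Lemma \ref{+dij}: insert the explicit value of $n$ from \eqref{n}, clear denominators by the positive quantities $1+\alpha+b\alpha$ and $1+b+b^2$, and reduce to a polynomial inequality in $b,\alpha,\beta$ which then factors and is seen to have the required sign using $\alpha<-1$ and $\beta\ge 1+(1+b)\alpha$, completing the proof.
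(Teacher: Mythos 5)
Your overall strategy is the same as the paper's: reuse the reduction formulas for $\frac{n\Delta_{ij}(u,v)}{C_iC_j(\lambda_j-\lambda_i)}$ from the proof of Lemma \ref{+dij}, substitute the relabelled eigenvalues $\lambda_1=\gamma_2$, $\lambda_2=\gamma_3$, $\lambda_3=\gamma_1$, and eliminate $n$ via \eqref{n}. Your case (iii) and your reduction of case (i) to $nQ-\lambda_3(1+b+d)\ge 0$ with $Q=(1+b)+(1+b)^2\alpha-b\beta$ are both correct (the paper disposes of (iii) instead by the general fact $\lambda_1<d$). Two points need attention. First, in case (ii) the controlling factor is \emph{not} $\beta-1-(2+b)\alpha$: that factor arises in the Lemma \ref{+dij} computation where $\lambda_3=\gamma_3=(1+b+b^2)(1+\beta)$, whereas here $\lambda_3=\gamma_1=1+\alpha+b\alpha+b^2\beta$, and carrying out the computation with \eqref{n} yields
$$\frac{n\Delta_{12}(u,v)}{C_1C_2(\lambda_2-\lambda_1)}=\frac{b\beta(\alpha-\beta)(1+b)^2\,\bigl((1+b)\alpha-\beta\bigr)\bigl(1+\alpha+b\alpha+b^2\beta\bigr)}{1+(1+b)\alpha},$$
so the factor that actually needs the feasibility constraint is $(1+b)\alpha-\beta\le -1<0$, coming from $\beta\ge 1+(1+b)\alpha$. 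Your named factor also happens to be positive in this regime, so your conclusion survives, but the algebra you are importing from the $b\ge 1$ case does not transfer verbatim. Second, case (i) is left as an unexecuted ``insert $n$, clear denominators and factor'' step, and that is exactly where the real work of the lemma lies. The inequality does go through: the paper's shortcut is to observe that $\frac{-c_2}{b_1}$ times the distance-$1$ expression equals the distance-$2$ expression plus $d\lambda_3(\lambda_3-d-1)\frac{b_2}{c_3}$, after which the factored form displayed above finishes the argument. As written, your proposal asserts rather than establishes this decisive factorization, so you should either carry out the polynomial computation or use the paper's reduction to the distance-$2$ case.
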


\begin{proof}
    As in the proof of Lemma \ref{+dij}, we have for $\{i,j,k\}=\{1,2,3\}$ that 
    \begin{align*}
\frac{n\Delta_{ij}(u,v)}{C_iC_j(\lambda_j-\lambda_i)}=
\begin{cases}
d\lambda_k(\lambda_k-d-1), &\text{if $d(u,v)=3$;}\\
d\lambda_k(\lambda_k-d-1)-\left((n-1)\lambda_k-nd\right)c_2, &\text{if $d(u,v)=2$;}\\
-(n-1-d)\lambda_k(\lambda_k-d-1)+\left((n-1)\lambda_k-nd\right)b_1, &\text{if $d(u,v)=1$.}
\end{cases}
\end{align*}

Next, we check the three cases in reverse order. 

\textbf{Case $(iii)$: $d(u,v)=3$.} We have
\begin{align*}
    \frac{n\Delta_{23}(u,v)}{C_2C_3(\lambda_3-\lambda_2)}=d\lambda_1(\lambda_1-d-1)<0.
\end{align*}
The inequality follows from the general fact that $\lambda_1<d$ for any $d$-regular graph, which can be shown by a trace argument. The inequality can also be checked by inserting the expressions of $\lambda_1=\gamma_2$ and $d$. Therefore, we have $\Delta_{23}(u,v)>0$.

\textbf{Case $(ii)$: $d(u,v)=2$.} In this case we have
\begin{align*}
    \frac{n\Delta_{12}(u,v)}{C_1C_2(\lambda_2-\lambda_1)}=&d\lambda_3(\lambda_3-d-1)-\left((n-1)\lambda_3-nd\right)c_2\\
    &=\left(d\lambda_3-(n-1)c_2\right)(\lambda_3-d-1)-(n-1-d)c_2.
\end{align*}
Recall that $c_2=(1+b)(1+\alpha)$, $\lambda_3=\gamma_1=1+\alpha+b\alpha+b^2\beta$, and $d=(1+b+b^2)\beta$. We derive by \eqref{n} that
\begin{align*}
    &\frac{n\Delta_{12}(u,v)}{C_1C_2(\lambda_2-\lambda_1)}\\
    =&\left(d\lambda_3-(n-1)(1+\alpha)(1+b)\right)(\alpha-\beta)(1+b)-(n-1-d)(1+\alpha)(1+b)\\
    =&\left(d\lambda_3-d(1+\alpha)(1+b)-db(\beta-\alpha)\left(1+\frac{b_2}{c_3}\right)(1+b)\right)(\alpha-\beta)(1+b)\\
    &\hspace{7cm}-db(\beta-\alpha)\left(1+\frac{b_2}{c_3}\right)(1+b)\\
    =&d(b^2\beta-b)(\alpha-\beta)(1+b)-db(\beta-\alpha)\left(1+\frac{b_2}{c_3}\right)(1+b)((\alpha-\beta)(1+b)+1).
\end{align*}
Recall that 
\begin{align*}
   d\left(1+\frac{b_2}{c_3}\right)=\beta\frac{(1+\alpha)(1+b+b^2)+b(\alpha+b\beta)}{1+\alpha+b\alpha}.
\end{align*}
Then, we continue to calculate 
\begin{align}
    &\frac{n\Delta_{12}(u,v)}{C_1C_2(\lambda_2-\lambda_1)}=\frac{b\beta(\alpha-\beta)(1+b)}{1+\alpha+b\alpha}\big\{(1+b+b^2)(b\beta-1)(1+\alpha+b\alpha)\notag\\
    &\hspace{3.5cm}+\left((1+\alpha)(1+b+b^2)+b(\alpha+b\beta)\right)((\alpha-\beta)(1+b)+1)\big\}.\label{eq:delta12Case2}
\end{align}
By the rearrangement $(\alpha-\beta)(1+b)-1=\alpha+b\alpha-\beta-(b\beta-1)$, we deduce
\begin{align*}
    &(1+b+b^2)(b\beta-1)(1+\alpha+b\alpha)+\left((1+\alpha)(1+b+b^2)+b(\alpha+b\beta)\right)((\alpha-\beta)(1+b)+1)\\
    =&b^2\left(\alpha+b\alpha-\beta\right)(b\beta-1)+\left\{(1+\alpha)(1+b+b^2)+b(\alpha+b\beta)\right\}(\alpha+b\alpha-\beta)\\
    =&(\alpha+b\alpha-\beta)(1+b)(1+\alpha+b\alpha+b^2\beta).
\end{align*}
Inserting into \eqref{eq:delta12Case2} yields
\begin{align}
    \frac{n\Delta_{12}(u,v)}{C_1C_2(\lambda_2-\lambda_1)}=&d\lambda_3(\lambda_3-d-1)-\left((n-1)\lambda_3-nd\right)c_2\notag\\
    =&\frac{b\beta(\alpha-\beta)(1+b)}{1+\alpha+b\alpha}(\alpha+b\alpha-\beta)(1+b)(1+\alpha+b\alpha+b^2\beta).\label{eq:delta12Case2final}
\end{align}
This implies that $\Delta_{12}(u,v)>0$.

\textbf{Case $(i)$: $d(u,v)=1$.} We have in this case that
\begin{align*}
   \frac{-c_2}{b_1}\cdot \frac{n\Delta_{12}(u,v)}{C_1C_2(\lambda_2-\lambda_1)}=&(n-1-d)\lambda_3(\lambda_3-d-1)\frac{1+\alpha}{b(\beta-\alpha)}-\left((n-1)\lambda_3-nd\right)c_2\\
   =&d\left(1+\frac{b_2}{c_3}\right)\lambda_3(\lambda_3-d-1)-\left((n-1)\lambda_3-nd\right)c_2.
\end{align*}
In the above, we have used $b_1=(b+b^2)(\beta-\alpha)$, $c_2=(1+b)(1+\alpha)$ and \eqref{n}. Using \eqref{eq:delta12Case2final}, we obtain
\begin{align*}
   &\frac{-c_2}{b_1}\cdot \frac{n\Delta_{12}(u,v)}{C_1C_2(\lambda_2-\lambda_1)}\\
   =&\frac{b\beta(\alpha-\beta)(1+b)}{1+\alpha+b\alpha}(\alpha+b\alpha-\beta)(1+b)(1+\alpha+b\alpha+b^2\beta)+d\lambda_3(\lambda_3-d-1)\frac{b_2}{c_3}.
\end{align*}
Observing that 
\begin{align*}
    d\lambda_3(\lambda_3-d-1)\frac{b_2}{c_3}=\frac{b\beta(\alpha-\beta)(1+b)}{1+\alpha+b\alpha}b(\beta-\alpha-b\alpha)(1+\alpha+b\alpha+b^2\beta)
\end{align*}
Hence, we derive
\begin{align*}
   \frac{-c_2}{b_1}\cdot \frac{n\Delta_{12}(u,v)}{C_1C_2(\lambda_2-\lambda_1)}=&\frac{b\beta(\alpha-\beta)(1+b)}{1+\alpha+b\alpha}(\alpha+b\alpha-\beta)(1+\alpha+b\alpha+b^2\beta)>0.
\end{align*}
Therefore, we have $\Delta_{12}(u,v)>0$. This completes the proof. 
\end{proof}

\begin{proof}[Proof of Theorem \ref{thm1} in case $b\leq -2$]
   Combining Lemma \ref{lemma:1+2-3}, Lemma \ref{lemma:L2-L(2+3)}, Lemma \ref{-di}, Lemma \ref{-dij} and Theorem \ref{condition}, we prove Theorem \ref{thm1} in case $b\leq -2$. 
\end{proof}

\begin{rmk}
Known distance regular graphs with classical parameters of diameter at least $3$ are listed in \cite[Tables 6.1 and 6.2]{BCN89}. For classification results of distance regular graphs with classical parameters of diameter at least $4$ and $b\leq -2$ or $b=1$, we refer to \cite{BCN89,DKT16,TLHHG24,Weng99} and the references therein. It is natural to expect the monotonic normalized heat diffusion property holds for any distance regular graphs with classical parameters. 
Recall that Proposition \ref{prop:eig} provides explicit expressions for all $D+1$ distinct Laplacian eigenvalues for any distance-regular graph with classical parameters $(D, b, \alpha, \beta)$. With this proposition in hand, the approach developed herein may pave the way for confirming the MNHD property for the cases where the number of distinct eigenvalues is greater than $4$. 
\end{rmk}

\section{Further Discussion} \label{sec7}

The distance-regular graphs with diameter $3$ are classified as antipodal, bipartite, and primitive \cite{Biggs82}. A distance-regular graph with an intersection array \(\{d, b_1, b_2; 1, c_2, c_3\}\) is antipodal if $b_2=1$ and $c_3=d$. In fact, the general form of the intersection array for an antipodal distance-regular graph of diameter $3$ is \(\{b_0, b_1, b_2; c_1, c_2, c_3\}=\{d, m\gamma, 1; 1, \gamma, d\}\), where $m$ is a positive integer \cite[Theorem 2]{Biggs82}. A distance-regular graph of diameter $3$ is primitive if it is neither antipodal nor bipartite. 

For the bipartite case, the MNHD property is true by the result of Kubo and Namba \cite{KN22}. For the antipodal case, we show that the MNHD property holds still.
\begin{thm}
    The antipodal distance-regular graphs with diameter $3$ have the MNHD property.
\end{thm}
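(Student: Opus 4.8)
The plan is to verify, for every pair of distinct vertices $u,v$, the hypotheses of Theorem~\ref{condition}. Writing the intersection array of an antipodal diameter-$3$ graph as $\{d,m\gamma,1;1,\gamma,d\}$ with $\gamma=c_2$ and $m\geq 1$ (so that $b_1=m\gamma$), I first record the elementary data: $n=(d+1)(m+1)$, $a_1=d-1-m\gamma$, $a_2=d-1-\gamma$ and $a_3=0$. Non-negativity of $a_1,a_2$ gives the two constraints $m\gamma\leq d-1$ and $\gamma\leq d-1$, which play the role here of Lemma~\ref{lem1}. The values of $L^2(u,v)$ are then supplied by Proposition~\ref{prop:L^2}, and the quantities $\Delta_i(u,v)$, $\Delta_{ij}(u,v)$ by \eqref{eq:drgdij} and the rearranged identity in the proof of Lemma~\ref{+dij}; the whole argument is a matter of feeding the antipodal data into these formulas.

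Second, I would compute the spectrum. From the tridiagonal intersection matrix one reads $\operatorname{tr}=2d-2-\gamma(m+1)$ and $\det=d^2$, and a direct check shows $-1$ is an adjacency eigenvalue; hence the adjacency eigenvalues are $d,\,-1,\,\theta_\pm=\tfrac12\!\left(s\pm\sqrt{s^2+4d}\right)$ with $s=d-1-\gamma(m+1)$, so that $\theta_++\theta_-=s$ and $\theta_+\theta_-=-d$. The nonzero Laplacian eigenvalues are therefore $d+1$ and $d-\theta_\pm$, with symmetric functions $\lambda_1+\lambda_3=d+1+\gamma(m+1)$ and $\lambda_1\lambda_3=d\gamma(m+1)$. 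Evaluating the quadratic $(\lambda-\lambda_1)(\lambda-\lambda_3)$ at $\lambda=d+1$ yields $-\gamma(m+1)<0$, which locates $d+1$ strictly between the other two roots; thus $\lambda_1<\lambda_2=d+1<\lambda_3$. The decisive structural feature is $\lambda_2=d+1$, i.e. $\lambda_2-d-1=0$ (equivalently, $\lambda_2$ comes from the adjacency eigenvalue $-1$ of the quotient $K_{d+1}$). Finally $|s|\leq d-1$ follows from $m\gamma\leq d-1$ and $\gamma\leq d-1$, so $(d-1)^2\geq s^2$, giving $\lambda_1+\lambda_2-\lambda_3=(d+1)-\sqrt{s^2+4d}\geq 0$.

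With these in hand the verifications collapse, because $\lambda_2-d-1=0$ kills many terms and the relation $\lambda_3^2=(\lambda_1+\lambda_3)\lambda_3-\lambda_1\lambda_3$ removes every square root, leaving expressions rational in $d,m,\gamma$. For the diagonal terms, \eqref{eq:drgdij} gives $\Delta_i(u,v)\geq 0$ at all three distances; the only non-obvious case, $\Delta_3$ at distance $2$, reduces to $m(d-\gamma)\geq 1$, which holds since $d-\gamma=a_2+1\geq 1$ and $m\geq 1$. For the off-diagonal terms I assign one case of Theorem~\ref{condition} to each distance: at distance $1$, Case~(i), where the bracket defining $\Delta_{12}$ reduces to $-m\gamma\lambda_3$ (against a negative prefactor, so $\Delta_{12}\geq 0$); at distance $2$, Case~(ii), where the bracket defining $\Delta_{13}$ reduces to $-m(d+1)\gamma$ precisely because $\lambda_2-d-1=0$, so $\Delta_{13}\leq 0$; and at distance $3$, Case~(iii), where $\Delta_{23}$ comes from $d\lambda_1(\lambda_1-d-1)<0$, so $\Delta_{23}\geq 0$. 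The side conditions $L^2(u,v)-L(u,v)(\lambda_i+\lambda_j)\geq 0$ are automatic for the non-adjacent pairs (distances $2$ and $3$) by the Remark following Theorem~\ref{condition}, and at distance $1$ the relevant quantity equals $a_1+1-\theta_->0$. Theorem~\ref{condition} then yields the MNHD property.

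I expect the main obstacle to be purely computational: organizing the spectral calculation and the $\Delta_{ij}$ algebra so that the irrational eigenvalues never appear explicitly. The conceptual shortcut that makes this manageable is the identity $\lambda_2=d+1$ together with systematic use of the symmetric functions $\lambda_1+\lambda_3$ and $\lambda_1\lambda_3$; without these the bracketed expressions would carry $\sqrt{s^2+4d}$ and the sign determinations would require squaring inequalities. A minor point to watch is confirming that $-1$ is genuinely distinct from $\theta_\pm$, so that the graph really has four distinct eigenvalues; this follows since substituting $\theta=-1$ into $x^2-sx-d$ gives $-\gamma(m+1)\neq 0$.
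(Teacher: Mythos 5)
Your proposal is correct and follows essentially the same route as the paper: compute the explicit spectrum of the antipodal graph with intersection array $\{d,m\gamma,1;1,\gamma,d\}$, exploit $\lambda_2=d+1$, and verify the hypotheses of Theorem~\ref{condition} distance by distance, with the same key inequalities ($\lambda_1+\lambda_2-\lambda_3=(d+1)-M\geq 0$, the sign of $\Delta_3$ at distance $2$ reducing to $m(d-\gamma)\geq 1$, and the brackets $-m\gamma\lambda_3$ and $-m(d+1)\gamma$). The only cosmetic divergence is at distance $3$, where you invoke Case~(iii) via $\Delta_{23}\geq 0$ while the paper uses Case~(ii) via $\Delta_{13}=0$; both are valid.
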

\begin{proof}
Let $G$ be an antipodal distance-regular graph with intersection array $\{d,m\gamma,1;1,\gamma,d\}$, where $m$ is a positive integer. Let us denote $M:=\sqrt{4d + (d  - \gamma - m\gamma-1)^2}$. The non-trivial Laplacian eigenvalues are listed below,
\begin{align}
\notag    \lambda_1&=d + \frac{1}{2} (1 - d + \gamma + m\gamma - M),\\
\notag \lambda_2&=1 + d,\\
 \lambda_3&=d + \frac{1}{2} (1 - d + \gamma + m\gamma + M),\notag
\end{align}
with $\lambda_1<\lambda_2<\lambda_3$.
The adjacency eigenvalues of an antipodal distance-regular graph of diameter $3$ have been calculated in \cite[Page 75]{Biggs82}. One can also derive them directly from the intersection matrix as explained in \cite[Page 13]{DKT16}.

We further check that $n=1+d+m+dm$. 

Next, we check the conditions in Theorem \ref{condition}. First, we observe that $\lambda_1+\lambda_2-\lambda_3\geq0$.
Indeed, we have $\lambda_1+\lambda_2-\lambda_3=1 + d - M$. As $(1+d)^2-M^2=\gamma(1+m)(2d-m-m\gamma-2)\geq0$, we derive $1+d-M\geq 0$.

Moreover, we have $L^2(u,v) - L(u,v)(\lambda_2 + \lambda_3) \geq 0$ for any two adjacent vertices $u$ and $v$.
Recall that $L(u,v)=-2d+a_1=-2d+(d-1-m\gamma)$ and $L(u,v)=-1$. We get $L^2(u,v) - L(u,v)(\lambda_2 + \lambda_3) = \frac{1}{2} (1 + d + \gamma - m\gamma + M)\geq 0$, as desired.

 \textbf{Case $(i)$: $d(u,v)=1$.} It follows from the identity \eqref{eq:drgdij} that 
 \begin{align*}
     \Delta_1(u,v)/C_1 =(\lambda_2-d-1)(\lambda_3-d-1)+d-a_1-1.
 \end{align*}
Noting that $\lambda_2=1 + d$ and $d-a_1-1=b_1=m\gamma$, we deduce $\Delta_1/C_1=m\gamma \geq 0$. Since $C_1>0$, we obtain $\Delta_1(u,v)\geq 0$.

Similarly, we have 
\begin{align*}
   \Delta_3(u,v)/C_3 =&(\lambda_1-d-1)(\lambda_2-d-1)+d-a_1-1=m\gamma>0.
\end{align*}
This implies $\Delta_3(u,v)\geq 0$.
For $\Delta_2(u,v)$, we calclulate
\begin{align*}
   \Delta_2(u,v)/C_2 =&(\lambda_1-d-1)(\lambda_3-d-1)+d-a_1-1\\
   =&\frac{1}{2} (-1 - d + \gamma + m\gamma - M)\cdot\frac{1}{2} (-1 - d + \gamma + m\gamma + M)+m\gamma\\
   =&-\gamma<0.
\end{align*} 
Since $C_2<0$, we obtain $\Delta_2(u,v)>0$.

By \eqref{eq:Deltaij_simple}, we have 
\begin{align*}
   \frac{n\Delta_{12}(u,v)}{C_1C_2(\lambda_2-\lambda_1)} =&-(n-1-d)\lambda_3(\lambda_3-d-1)+\left((n-1)\lambda_3-nd\right)b_1\\
   =&-(n-1-d)\lambda_3(\lambda_3-d-1)+n(\lambda_3-d)b_1-\lambda_3b_1\\
   =&-\lambda_3b_1<0,
\end{align*}
where we used 
\begin{align*}
    &-(n-1-d)\lambda_3(\lambda_3-d-1)+n(\lambda_3-d)b_1\\
    =&-m(d+1)\frac{1}{4}(1+d+\gamma+m\gamma+M)(-1-d+\gamma+m\gamma+M)\\
    &\hspace{3cm}+\frac{m\gamma}{2}(d+1)(m+1)(1-d+\gamma+m\gamma+M)\\
    =&-\frac{m(d+1)}{4}\left\{(M+\gamma(1+m))^2-(d+1)^2-2\gamma(m+1)(1-d+\gamma(m+1)+M)\right\}\\
    =&0.
\end{align*}
Now we apply Theorem \ref{condition} to conclude that the function $r_t(u,v):=\frac{H_t(u,v)}{H_t(u,u)}$ is monotonically non-decreasing in $t$ for any two adjacent vertices $u$ and $v$. 

\textbf{Case $(ii)$: $d(u,v)=2$.} Similar as in Case (i), we have \begin{align*}
    \Delta_1(u,v)/C_1=&(\lambda_2-d)(\lambda_3-d)+d-c_2\\
    =&\frac{1}{2} (1 - d + \gamma + m\gamma + M)+d-\gamma\\
    =&\frac{1}{2} \left( 1 + d - \gamma + m \gamma + M \right)>0,
\end{align*}
\begin{align*}
    \Delta_2(u,v)/C_2=(\lambda_1-d)(\lambda_3-d)+d-c_2=-\gamma<0.
\end{align*}
and
\begin{align*}
    \Delta_3(u,v)/C_3=(\lambda_1-d)(\lambda_2-d)+d-c_2=\frac{1}{2} \left( 1 + d - \gamma + m \gamma - M \right)>0,
\end{align*}
where in the last inequlaity we used $(1+d-\gamma+m\gamma)^2-M^2=-4\gamma(1-dm+m\gamma)>0$. Therefore, we obtain $\Delta_i(u,v)\geq 0$ for each $i=1,2,3$.

Furthermore, we check
\begin{align*}
   \frac{n\Delta_{13}(u,v)}{C_1C_3(\lambda_3-\lambda_1)} = d\lambda_2(\lambda_2-d-1)-\left((n-1)\lambda_2-nd\right)c_2=-(1+d)m\gamma<0,
\end{align*}
and, hence, $\Delta_{13}(u,v)<0$. We apply Theorem \ref{condition} to conclude that the function $r_t(u,v)$ is monotonically non-decreasing in $t$ for any two  vertices $u$ and $v$ with $d(u,v)=2.$ 

\textbf{Case $(iii)$: $d(u,v)=3$.} Similar as in Case (i), we have \begin{align*}
    \Delta_1(u,v)/C_1=(\lambda_2-d)(\lambda_3-d)+d=\frac{1}{2} \left( 1 + d + \gamma + m \gamma + M \right)>0,
\end{align*}
$$\Delta_2(u,v)/C_2=(\lambda_1-d)(\lambda_3-d)+d=0,$$
and
$$\Delta_3(u,v)/C_3=(\lambda_1-d)(\lambda_2-d)+d=\frac{1}{2} \left( 1 + d + \gamma + m \gamma - M \right)>0,$$
where in the last inequality we used $(1+d+\gamma+m\gamma)^2-M^2=4d(1+m)\gamma>0$. Hence, we obtain $\Delta_i(u,v)\geq 0$ for each $i=1,2,3$.

Furthermore, we check
\begin{align*}
   \frac{n\Delta_{13}(u,v)}{C_1C_3(\lambda_3-\lambda_1)} =d\lambda_2(\lambda_2-d-1)=0.
\end{align*}
Applying Theorem \ref{condition}, we derive that the function $r_t(u,v)$ is monotonically non-decreasing in $t$ for any two vertices $u$ and $v$ with $d(u,v)=3$. 
\end{proof}

Our main Theorem \ref{thm1} implies that the MNHD property holds for primitive distance-regular graphs with classical parameters of diameter $3$. It is natural to ask whether all distance-regular graphs with diameter $3$ have the MNHD property or not.


\section*{Acknowledgement}
We are very grateful to the anonymous referee for suggestions that improved the readability of our manuscript. H.Z. is very grateful to Kaizhe Chen and Chenhui Lv for
their helpful discussions. This work is supported by National Key R \& D Program of China 2023YFA1010200 and National Natural Science Foundation of China No. 12031017 and No. 12431004. 

\subsection*{Availability of Data}  No data was used for the research described in the article.

\subsection*{Declarations of Conflict of Interest}
The authors do not have any possible conflicts of
interest.

\end{document}